\documentclass{amsart}
\usepackage[total={6in, 8in}]{geometry}
\usepackage{graphicx} 
\usepackage{amsfonts}
\usepackage{amssymb, dirtytalk}
\usepackage{color}
\usepackage{amsmath}
\usepackage{amsthm}
\usepackage{multicol}
\usepackage{mathtools}
\usepackage{tikz,tikz-cd, tcolorbox}
\usepackage[colorlinks=true,linkcolor=blue,citecolor=blue]{hyperref}
\usepackage{setspace, mathrsfs}
\usepackage{bussproofs}
\theoremstyle{plain}
\newtheorem{theorem}{Theorem}[section]
\newtheorem{lemma}[theorem]{Lemma}
\newtheorem{proposition}[theorem]{Proposition}
\newtheorem{corollary}[theorem]{Corollary}
\usepackage{amsaddr}

\usepackage{graphicx}
\usepackage{multicol,multirow}
\usepackage{rotating}
\usepackage{appendix}
\usepackage{longtable}
\usepackage[sort&compress,numbers]{natbib}

\usepackage{xcolor,multicol}
\newcommand{\rul}[1]{\ensuremath{\mathrm{(#1)}}}
\newcommand{\Hom}{\ensuremath{\mathrm{Hom}}}
\newcommand{\tuple}[1]{\ensuremath{\langle{#1}\rangle}}
\newcommand{\alg}[1]{{\ensuremath{\boldsymbol {\mathit{#1}}}}}
\long\def\fnote#1{\bgroup\color{olive} (Filip: #1) \egroup}

\theoremstyle{definition}
\newtheorem{definition}[theorem]{Definition}
\newtheorem{remark}[theorem]{Remark}
\newtheorem{example}[theorem]{Example}

\begin{document}
\title[The pairwise Stone space of an S4 De Morgan algebra]{The pairwise Stone space of an S4 De Morgan algebra}
\author{Joseph McDonald and Filip Jankovec}
\address{Institute of Computer Science, Czech Academy of Sciences}

\email{mcdonald@cs.cas.cz, jankovec@cs.cas.cz}
\date{July 2026}
\maketitle

\begin{abstract}
    The purpose of this study is to investigate the bitopological duality theory of De Morgan algebras equipped with a closure operator, known as \emph{S4 De Morgan algebras}. We first introduce certain expansions of pairwise Stone spaces, which we call \emph{pairwise S4 De Morgan Stone spaces} (henceforth, \emph{PS4D-spaces}). These consist of a pairwise Stone space $X$ equipped with  a twist continuous involution $g\colon X\to X$, as well as a binary relation $R\subseteq X\times X$ that is reflexive and transitive. We first demonstrate that the bitopological spectrum $\mathcal{S}_0(A)$ of prime filters of an S4 De Morgan algebra $A$ gives rise to a PS4D-space. A topological representation is then obtained by exhibiting an isomorphism from $A$ to the S4 De Morgan algebra $\mathcal{A}_0(\mathcal{S}_0(A))$ of $(\tau_1,\delta_2)$-biclopen subsets of $\mathcal{S}_0(A)$ whose operation of De Morgan involution is defined through $g$ and whose closure operator is defined through $R$. We then provide an algebraic realization theorem by showing that every PS4D-space $X$ is bihomeomorphic and relationally isomorphic to the bitopological spectrum $\mathcal{S}_0(\mathcal{A}_0(X))$ of prime filters of $\mathcal{A}_0(X)$. With the introduction of suitable bicontinuous frame morphisms, we show that the category $\mathbf{S4D}$ of S4 De Morgan algebras is dually equivalent to the category $\mathbf{PStone_{S4D}}$ of PS4D-spaces. As an application, we provide bitopological characterizations of filters and ideals in general De Morgan algebras under our established duality as well as bitopological soundness and completeness results for an S4-type modal extension of the calculus FDE of first-degree entailment.
\end{abstract}

\section{Introduction}

Stone duality for bounded distributive lattices \cite{stone2} asserts that the category $\mathbf{DLatt_{01}}$ of bounded distributive lattices and bounded lattice homomorphisms is dually equivalent to the category $\mathbf{Spec}$ of spectral spaces and continuous functions whose inverse image of a compact set is compact. Priestley \cite{priestly} provided an alternative duality for $\mathbf{DLatt_{01}}$ relative to that of Stone by introducing certain partially ordered Stone spaces, known as \emph{Priestley spaces}, and showed that $\mathbf{DLatt_{01}}$ is dually equivalent to the category $\mathbf{Priest}$ of Priestley spaces and continuous monotone functions. Since $\mathbf{Spec}$ and $\mathbf{Priest}$ are both dually equivalent to $\mathbf{DLatt_{01}}$, it follows that $\mathbf{Spec}$ is equivalent to $\mathbf{Priest}$. It was later shown by Cornish \cite{cornish} (see also Fleisher \cite{fleisher}) that $\mathbf{Spec}$ is in fact isomorphic to $\mathbf{Priest}$. More recently, N. Bezhanishvili, G. Bezhanishvili, Gabelaia, and Kurz \cite{bezhanishvili} extended this isomorphism to the category $\mathbf{PStone}$ of pairwise Stone spaces and bicontinuous functions, thereby providing an alternative duality for $\mathbf{DLatt_{01}}$ relative to that of Stone and Priestley.

This three-way isomorphism between $\mathbf{Spec}$, $\mathbf{Priest}$, and $\mathbf{PStone}$ suggests that existing dualities for bounded distributive lattice-based algebras, which are often based on Priestley spaces or spectral spaces, admit of alternative bitopological dualities using pairwise Stone spaces. Under this motivation, N. Bezhanishvili, G. Bezhanishvili, Gabelaia, and Kurz \cite{bezhanishvili} translated Esakia duality for Heyting algebras, which \say{piggy-backs} off of Priestley duality, under the isomorphism between $\mathbf{Priest}$ and $\mathbf{PStone}$ by showing that the category $\mathbf{Heyt}$ of Heyting algebras and Heyting algebra homomorphisms is dually equivalent to the category $\mathbf{PEsakia}$ of pairwise Esakia spaces and bicontinuous Esakia morphisms. Likewise, Das and Ray \cite{das} provided bitopological duality results for the Heyting algebra expansions corresponding to Fitting's logic.

Recently, McDonald \cite{mcdonald} provided spectral duality results for S4 De Morgan algebras and De Morgan groupoids by appropriately adapting and extending the Priestley dualities for general De Morgan algebras developed by Cornish and Fowler \cite{cornish1} and Bimb\'o \cite{bimbo} as well as the Priestley duality for relevance algebras developed by Urquhart \cite{urq} under the isomorphism between $\mathbf{Priest}$ and $\mathbf{Spec}$. A \emph{De Morgan algebra} is a bounded distributive lattice $A$ equipped with an involution $-\colon A\to A$ satisfying De Morgan's identities. An \emph{S4 De Morgan algebra} consists of a De Morgan algebra $A$ equipped with a closure operator $\nabla\colon A\to A$. De Morgan algebras play an important role in non-classical logic as they form the algebraic counterpart to the relevance logic FDE developed by Belnap and Dunn \cite{belnap,Dunn:SemanticsBD}. The purpose of this study is to investigate the bitopological duality theory of S4 De Morgan algebras by translating the spectral duality results obtained in \cite{mcdonald} under the isomorphism between $\mathbf{Spec}$ and $\mathbf{PStone}$. We first introduce certain expansions of pairwise Stone spaces, which we call \emph{pairwise S4 De Morgan Stone spaces} (henceforth, \emph{PS4D-spaces}). These consist of a pairwise Stone space $X$ equipped with a twist continuous involution $g\colon X\to X$, as well as a binary relation $R\subseteq X\times X$ that is reflexive and transitive. We first demonstrate that the bitopological spectrum $\mathcal{S}_0(A)$ of prime filters of an S4 De Morgan algebra $A$ gives rise to a PS4D-space. A topological representation is then obtained by exhibiting an isomorphism from $A$ to the S4 De Morgan algebra $\mathcal{A}_0(\mathcal{S}_0(A))$ of $(\tau_1,\delta_2)$-biclopen subsets of $\mathcal{S}_0(A)$ whose operation of De Morgan involution is defined through $g$ and whose closure operator is defined through $R$. We then provide an algebraic realization theorem by showing that every PS4D-space $X$ is bihomeomorphic and relationally isomorphic to the bitopological spectrum $\mathcal{S}_0(\mathcal{A}_0(X))$ of prime filters of $\mathcal{A}_0(X)$. With the introduction of suitable bicontinuous frame morphisms, we show that the category $\mathbf{S4D}$ of S4 De Morgan algebras is dually equivalent to the category $\mathbf{PStone_{S4D}}$ of PS4D-spaces. As an application, we provide bitopological characterizations of filters and ideals in general De Morgan algebras under our established duality as well as bitopological soundness and completeness results for an S4-type modal extension of the calculus FDE of first-degree entailment.      

The contents of this paper are organized in the following manner: In Section \ref{sec:preliminaries}, we describe some basic facts about De Morgan algebras as well as S4 De Morgan algebras. In Section \ref{sec:representation}, we introduce pairwise S4 De Morgan Stone spaces and provide the promised bitopological representation of S4 De Morgan algebras. In Section \ref{sec:duality}, we provide an algebraic realization for pairwise S4 De Morgan Stone spaces and demonstrate that the category $\mathbf{S4D}$ of S4 De Morgan algebras is dually equivalent to the category $\mathbf{PStone_{S4D}}$ of pairwise S4 De Morgan Stone spaces. In Section \ref{sec:applications}, we provide the promised applications of our obtained duality result within the algebraic theory of general De Morgan algebras as well as the bitopological model theory of the S4 modal extension of FDE.

\section{Modal expansions of De Morgan algebras} \label{sec:preliminaries}

In this section, we describe some basics of De Morgan algebras and their S4 modal expansion. For more details on De Morgan algebras, consult \cite{prenosil, pynko}. 
\begin{definition}
    A \emph{De Morgan algebra} is a bounded distributive lattice $\tuple{A;\wedge,\vee,0,1}$ equipped with an involution $-\colon A\to A$ satisfying De Morgan's identities: 
    \begin{enumerate}
        \item $-(a\wedge b)=-a\vee-b$;
        \item $-(a\vee b)=-a\wedge-b$. 
        \end{enumerate}
         \end{definition}
A De Morgan algebra may be equivalently viewed as a bounded distributive lattice $A$ equipped with an order-inverting involution $-\colon A\to A$.

By $\mathbf{DM}$ we denote the variety of De Morgan algebras. Let us note that $\mathbf{DM}$ is a finitely generated variety of algebras. In particular, we have: 
\[\mathbf{DM}=\mathbb{HSP}(\text{DM}_4).\] There is a bijection between the underlying sets of the De Morgan algebra $\text{DM}_4$ and the direct product $\text{B}_2\times \text{B}_2$ where $\text{B}_2$ is the two-element Boolean algebra, however, this bijection is not an isomorphism.
The Hasse diagrams of $\text{B}_2\times \text{B}_2$ and $\text{DM}_4$ are depicted in Figure 1. 

\begin{figure}[ht]
    \centering
\begin{tikzcd}
                             & 1=\neg 0                                         &                              &                          & 1=-0                                         &                          \\
a=\neg b \arrow[ru, no head] &                                                  & b=\neg a \arrow[lu, no head] & a=-a \arrow[ru, no head] &                                              & b=-b \arrow[lu, no head] \\
                             & 0=\neg 1 \arrow[lu, no head] \arrow[ru, no head] &                              &                          & 0=-1 \arrow[ru, no head] \arrow[lu, no head] &                         
\end{tikzcd}
\par
\vspace{.5cm}
    \caption{The $\text{B}_2\times \text{B}_2$ Boolean algebra (left) and $\text{DM}_4$ De Morgan algebra (right)}
    \label{hasse diagrams of de morgan algebras}
    \label{fig:DM4}
\end{figure}

Clearly $\text{B}_2\times \text{B}_2$ has no fixed points with respect to Boolean complementation $\neg$ whereas $\text{DM}_4$ has two fixed points with respect to De Morgan involution. Hence $\text{B}_2\times \text{B}_2$ is not isomorphic to $\text{DM}_4$.

\begin{definition}
    A \emph{modal De Morgan algebra} is a De Morgan algebra $A$ equipped with a normal additive operator $\nabla\colon A \to A$, i.e.:
\begin{enumerate}
    \item $\nabla(a\vee b)=\nabla a\vee\nabla b$; 
    \item $\nabla0=0$. 
\end{enumerate}
\end{definition}
\begin{remark}\label{remark}
    In any modal De Morgan algebra, one may define an operator $\Delta\colon A\to A$ by $\Delta a:=-\nabla-a$ which results in a multiplicative co-normal operator since:
    \begin{align*}
        \Delta(a\wedge b)&=-\nabla-(a\wedge b)\\&=-\nabla(-a\vee-b)\\&=-(\nabla-a\vee\nabla-b)\\&=-\nabla-a\wedge-\nabla-b\\&=\Delta a\wedge\Delta b
    \end{align*}
 and $\Delta 1=-\nabla-1=-\nabla 0=-0=1$ so that $\Delta(a\wedge b)=\Delta a\wedge\Delta b$ and $\Delta 1=1$. 
\end{remark}
\begin{definition}
    An \emph{S4 De Morgan algebra} is a modal De Morgan algebra $\tuple{A;\nabla}$ such that $\nabla$ is increasing and idempotent, i.e.: 
\begin{enumerate}
    \item $a\leq\nabla a$; 
    \item $\nabla\nabla a=\nabla a$. 
\end{enumerate} 
\end{definition}
Clearly if $A$ is an S4 De Morgan algebra, then $\nabla\colon A\to A$ is a closure operator on $A$. 
\begin{example}
    Let $[0,1]$ be the real closed unit interval, let $\eta(a)=1-a$, and let:
    \[c(a)=\begin{cases}
        0, & \text{if $a=0$}\\
        1, & \text{otherwise}
    \end{cases}\]
    Then $\tuple{[0,1];\min,\max,\eta,0,1,c}$ is an S4 De Morgan algebra. 
\end{example}
\begin{remark}
    Clearly if $\tuple{A;\nabla}$ is an S4 De Morgan algebra then $\Delta\colon A\to A$ satisfies $\Delta a\leq a$ and $\Delta\Delta a=\Delta a$. For the former, observe that $-a\leq\nabla-a$ so $-\nabla-a\leq--a$ and hence $-\nabla-a\leq a$ so $\Delta a\leq a$. For the latter, we have: 
    \begin{align*}
        \Delta\Delta a=\Delta-\nabla-a
        =-\nabla--\nabla-a=-\nabla\nabla-a
        =-\nabla-a
        =\Delta a.
    \end{align*}
This, together with Remark \ref{remark} implies that $\Delta$ is an interior operator on $A$. 
\end{remark}

\section{Bitopological representation} \label{sec:representation}

In this section, we provide a bitopological representation theorem by showing that every S4 De Morgan algebra is isomorphic to the $(\tau_1,\delta_2)$-biclopen subsets of a pairwise Stone space. 
\subsection{Pairwise Stone spaces}
A \emph{bitopological space} is a triple $\tuple{X;\tau_1,\tau_2}$ such that $X$ is a non-empty set and $\tau_1,\tau_2\subseteq\wp(X)$ are topologies on $X$. If $\tuple{X;\tau_1,\tau_2}$ and $\langle X';\tau_1',\tau_2'\rangle$ are bitopological spaces, a function $f\colon X\to X'$ is \emph{bicontinuous} if $f\colon \langle X;\tau_1\rangle\to\langle X';\tau_1'\rangle$ and $f\colon \langle X;\tau_2\rangle\to\langle X';\tau_2'\rangle$ are continuous. A bicontinuous function $f\colon X\to X'$ is then said to be a \emph{bihomeomorphism} if $f$ is a bijection with bicontinuous inverse $f^{-1}$. We call a map $f\colon\tuple{X;\tau_1,\tau_2}\to\tuple{X;\tau_1,\tau_2}$ \emph{twist bicontinuous} if $f^{-1}[U]\in\tau_2$ for each $U\in\tau_1$ and $f^{-1}[U]\in\tau_1$ for each $U\in\tau_2$. For a bitopological space $\tuple{X;\tau_1,\tau_2}$, let $\delta_1$ be the collection of closed subsets of $\tuple{X;\tau_1}$ and let $\delta_2$ be the collection of closed subsets of $\tuple{X;\tau_2}$. Moreover, let us set $\beta_1=\tau_1 \cap \delta_2$ and $\beta_2=\tau_2 \cap \delta_1$.
\begin{definition}[\cite{Salbany:BitopologicalSpaces,bezhanishvili}]\label{pairwise stone space}
    Let $\tuple{X;\tau_1,\tau_2}$ be a bitopological space. Then: 
    \begin{enumerate}
        \item $\tuple{X;\tau_1,\tau_2}$ is \emph{pairwise compact} if every cover $\{U_i:i\in I\}$ of $X$ with $U_i\in\tau_1\cup\tau_2$ has a finite subcover; 
        \item $\tuple{X;\tau_1,\tau_2}$ is \emph{pairwise zero-dimensional} if $\beta_1$ is a basis of $\tau_1$ and $\beta_2$ is a basis of $\tau_2$; 
        \item $\tuple{X;\tau_1,\tau_2}$ is \emph{pairwise Hausdorff} if for all $x,y\in X$ such that $x\not=y$, there exist $U\in\tau_1$ and $V\in\tau_2$ such that $U\cap V=\emptyset$ and $x\in U$ and $y\in V$; 
        \item $\tuple{X;\tau_1,\tau_2}$ is a \emph{pairwise Stone space} if it is pairwise compact, pairwise zero-dimensional, and pairwise Hausdorff. 
    \end{enumerate}
\end{definition}
\begin{remark}
Observe that the property of pairwise zero-dimensionality amounts to asserting that open sets in $\tuple{X;\tau_1}$ that are closed in $\tuple{X;\tau_2}$ form a basis for $\tuple{X;\tau_1}$ and that open sets in $\tuple{X;\tau_2}$ that are closed in $\tuple{X;\tau_1}$ form a basis for $\tuple{X;\tau_2}$. 
\end{remark}
If $\tuple{X;\tau_1,\tau_2}$ is a bitopological space, let $\sigma_1$ and $\sigma_2$ denote the collection of compact subsets of $\tuple{X;\tau_1}$ and $\tuple{X;\tau_2}$, respectively. 
 \begin{proposition}\label{p:compact}
    A bitopological space $\tuple{X;\tau_1,\tau_2}$ is pairwise compact if and only if $\delta_1\subseteq\sigma_2$ and $\delta_2\subseteq\sigma_1$. 
\end{proposition}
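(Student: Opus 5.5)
We prove the two implications separately; both are routine manipulations of covers. The only subtle point is that a cover mixing $\tau_1$- and $\tau_2$-open sets must be split so that each part can be handled by one of the inclusions.

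For the forward direction, assume $\tuple{X;\tau_1,\tau_2}$ is pairwise compact. By symmetry it suffices to show $\delta_1\subseteq\sigma_2$. Let $F\in\delta_1$ and let $\{V_i:i\in I\}\subseteq\tau_2$ cover $F$. Then $X\setminus F\in\tau_1$, so $\{X\setminus F\}\cup\{V_i:i\in I\}$ is a cover of $X$ by members of $\tau_1\cup\tau_2$. Pairwise compactness gives a finite subcover. Discarding $X\setminus F$ if it occurs, the remaining finitely many $V_i$ cover $F$. Hence $F$ is compact in $\tuple{X;\tau_2}$.

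For the converse, assume $\delta_1\subseteq\sigma_2$ and $\delta_2\subseteq\sigma_1$. Let $\mathcal{U}\subseteq\tau_1\cup\tau_2$ be a cover of $X$, and split it as $\mathcal{U}_1=\mathcal{U}\cap\tau_1$ and $\mathcal{U}_2=\mathcal{U}\setminus\mathcal{U}_1\subseteq\tau_2$. Set $U=\bigcup\mathcal{U}_1\in\tau_1$. The complement $F=X\setminus U$ is $\tau_1$-closed, so $F\in\sigma_2$. Since $\mathcal{U}_1$ covers nothing outside $U$, the family $\mathcal{U}_2$ covers $F$. By $\tau_2$-compactness of $F$, finitely many members $V_1,\dots,V_n$ of $\mathcal{U}_2$ cover $F$.

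It remains to cover $U$ by finitely many members of $\mathcal{U}_1$, and $U$ need not be closed, so we cannot apply a hypothesis to it directly. Instead, consider $G=X\setminus(V_1\cup\dots\cup V_n)$. This set is $\tau_2$-closed, so $G\in\sigma_1$. Moreover $G\subseteq U$, because $X=U\cup V_1\cup\dots\cup V_n$. Thus $\mathcal{U}_1$ is a $\tau_1$-open cover of $G$, and finitely many members $U_1,\dots,U_m$ of $\mathcal{U}_1$ cover $G$. Then $U_1,\dots,U_m,V_1,\dots,V_n$ is a finite subcover of $X$. This two-step use of the hypotheses is the main point of the argument; the degenerate cases where $\mathcal{U}_1$ or $\mathcal{U}_2$ is empty are covered by the same reasoning.
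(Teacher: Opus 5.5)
Your proof is correct. The forward direction adjoins the $\tau_1$-open set $X\setminus F$ to a $\tau_2$-cover of $F$, and the converse applies both hypotheses in turn: first to $X\setminus\bigcup\mathcal{U}_1\in\delta_1\subseteq\sigma_2$, then to $X\setminus(V_1\cup\dots\cup V_n)\in\delta_2\subseteq\sigma_1$. The paper gives no argument of its own and simply cites \cite[Proposition 2.9]{bezhanishvili}; your two-step cover argument is the standard proof of that result.
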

\begin{proof}
    Consult \cite[Proposition 2.9]{bezhanishvili} for the proof. 
\end{proof}
\begin{definition}
    Let $\tuple{X;\tau_1,\tau_2}$ be a pairwise Stone space. Then a subset $U\subseteq X$ is said to be $(\tau_1,\delta_2)$-\emph{biclopen} if $U\in\tau_1\cap\delta_2$ and $(\delta_1,\tau_2)$-\emph{biclopen} if $U\in\delta_1\cap\tau_2$. 
\end{definition}
\begin{lemma} \label{l:injectivity}
    Let $\langle X,\tau_1,\tau_2\rangle$ be a pairwise zero-dimensional and pairwise Hausdorff space with $x,y \in X$ such that $x \neq y$. Then there is $U \in \beta_1$ and $V \in \beta_2$ such that $U \cap V=\emptyset$ and $x \in U$ and $y \in V$.
\end{lemma}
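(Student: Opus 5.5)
Fix $x\neq y$. By pairwise Hausdorffness we get $U'\in\tau_1$ and $V'\in\tau_2$ with $U'\cap V'=\emptyset$, $x\in U'$ and $y\in V'$. The idea is to shrink each of these open sets to a basic biclopen set, using pairwise zero-dimensionality, while keeping the points inside.

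Since $\beta_1=\tau_1\cap\delta_2$ is a basis for $\tau_1$ and $x\in U'\in\tau_1$, there is a basic set $U\in\beta_1$ with $x\in U\subseteq U'$. In the same way, $\beta_2=\tau_2\cap\delta_1$ is a basis for $\tau_2$ and $y\in V'\in\tau_2$, so there is $V\in\beta_2$ with $y\in V\subseteq V'$. Disjointness is then inherited: $U\cap V\subseteq U'\cap V'=\emptyset$. This gives the required $U\in\beta_1$ and $V\in\beta_2$.

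There is essentially no obstacle here. The only point to check is that the definition of a basis supplies, for each point of an open set, a basic set containing that point and contained in the open set. This is the standard meaning of ``basis'' in Definition \ref{pairwise stone space}(2). Note also that pairwise compactness is not needed, which matches the hypotheses of the lemma.
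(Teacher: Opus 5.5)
Your proof is correct and is the paper's own argument: shrink the open sets from pairwise Hausdorffness to members of $\beta_1$ and $\beta_2$ that still contain $x$ and $y$, and disjointness is inherited. The paper states the shrinking step by writing each open set as a union of basic sets and choosing a member that contains the point, which amounts to the same thing.
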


\begin{proof}
    Since $X$ is pairwise Hausdorff space, it follows there are $\Tilde{U} \in \tau_1$ and $\Tilde{V} \in \tau_2$ such that $x \in \Tilde{U}$, $y \in \Tilde{V}$ and $\Tilde{U} \cap \Tilde{V}=\emptyset$. 
Since $X$ is a pairwise zero-dimensional space, we have
\[\Tilde{U}=\bigcup_{i \in I} U_i,\hspace{.2cm}\Tilde{V}=\bigcup_{j \in J} V_j\]
where $U_i \in \tau_1 \cap \delta_2$ and $V_j \in \tau_2 \cap \delta_1$ for all $i \in I$ and $j \in J$. Let us fix $U \in \tau_1 \cap \delta_2$ with $x \in U \subseteq \Tilde{U}$ and $V \in \tau_2 \cap \delta_1$ such that $y \in V \subseteq \Tilde{V}$. Since $\Tilde{U} \cap \Tilde{V}=\emptyset$, also $U \cap V=\emptyset$.
\end{proof}

\subsection{Pairwise S4 De Morgan Stone spaces}
In this subsection, we introduce the pairwise Stone duals of S4 De Morgan algebras. We make the following observations.

\begin{definition}
For a $T_0$ topological space $\langle X; \tau \rangle$, let $\leq_\tau$ denote the specialization order; that is, for $x, y \in X$, we have $x \leq_\tau y$ if $x$ is in the closure of $\{y\}$. Alternately, the specialization order may be defined by setting $x\leq_{\tau}y$ if and only if $x\in U$ implies $y\in U$ for every $U\in\tau$. For a bitopological space $\langle X; \tau_1, \tau_2 \rangle$ in which both $\langle X; \tau_1 \rangle$ and $\langle X; \tau_2 \rangle$ are $T_0$, let $\leq_{\tau_1}$ and $\leq_{\tau_2}$ denote the specialization orders of $\langle X; \tau_1 \rangle$ and $\langle X; \tau_2 \rangle$, respectively.
\end{definition}

Let us note here that by \cite[Lemma 2.5]{bezhanishvili}, the pairwise zero-dimensionality of $\langle X; \tau_1, \tau_2 \rangle$ implies that the pairwise Hausdorffness of $\langle X; \tau_1, \tau_2 \rangle$ is equivalent to the fact that both $\langle X; \tau_1 \rangle$ and $\langle X; \tau_2 \rangle$ are $T_0$, and thus the specialization orders $\leq_{\tau_1}$ and $\leq_{\tau_2}$ are partial orders.

\begin{lemma}
    Let $\langle X; \tau_1, \tau_2 \rangle$ be a pairwise zero-dimensional and pairwise Hausdorff bitopological space. 
    Then $x \leq_{\tau_1} y$ if and only if $y \leq_{\tau_2} x$.
\end{lemma}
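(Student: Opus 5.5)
By symmetry of the statement, it suffices to prove one implication: if $x \leq_{\tau_1} y$ then $y \leq_{\tau_2} x$. The converse follows by the same argument with the roles of $\tau_1$ and $\tau_2$ (and of $\beta_1$ and $\beta_2$) interchanged, since the hypotheses of pairwise zero-dimensionality and pairwise Hausdorffness are symmetric in the two topologies. I will argue by contraposition and use that each topology has a basis of sets that are closed in the other topology.

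Suppose $y \not\leq_{\tau_2} x$. By the second description of the specialization order, there is $V \in \tau_2$ with $y \in V$ and $x \notin V$. By pairwise zero-dimensionality, $\beta_2 = \tau_2 \cap \delta_1$ is a basis of $\tau_2$. Hence we may shrink $V$ to some $V' \in \beta_2$ with $y \in V' \subseteq V$, and still $x \notin V'$. Since $V' \in \delta_1$, its complement $U := X \setminus V'$ belongs to $\tau_1$. We then have $x \in U$ and $y \notin U$, which witnesses $x \not\leq_{\tau_1} y$. This proves the contrapositive of the desired implication.

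Applying the symmetric argument gives the other direction. Suppose $x \not\leq_{\tau_1} y$, witnessed by some $U \in \tau_1$ with $x \in U$ and $y \notin U$. Shrink $U$ to a basic set in $\beta_1 = \tau_1 \cap \delta_2$ still containing $x$; its complement is then $\tau_2$-open, contains $y$, and omits $x$. Thus $y \not\leq_{\tau_2} x$.

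There is no real obstacle. Pairwise Hausdorffness is not even needed for the equivalence itself; it only guarantees, via \cite[Lemma 2.5]{bezhanishvili}, that both specialization relations are partial orders, so that the statement is about orders. The one point needing care is to use the open-set formulation of $\leq_\tau$ consistently, rather than the closure formulation, whose direction convention is easy to misread.
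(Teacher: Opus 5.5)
Your proof is correct. The paper gives no argument of its own and only cites \cite[Lemma 3.3]{bezhanishvili}; your approach is the standard one behind that citation: shrink a separating open set to a basic set in $\beta_1=\tau_1\cap\delta_2$ (resp.\ $\beta_2=\tau_2\cap\delta_1$), then pass to its complement, which lies in the other topology. You are also right that only pairwise zero-dimensionality is used, and that pairwise Hausdorffness serves only to make $\leq_{\tau_1}$ and $\leq_{\tau_2}$ partial orders.
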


\begin{proof}
    See \cite[Lemma 3.3]{bezhanishvili}.
\end{proof}

\begin{lemma}
 Let $\langle X; \tau_1, \tau_2 \rangle$ be pairwise zero-dimensional and pairwise Hausdorff space and let $g\colon \tuple{X;\tau_1,\tau_2}\to\tuple{X;\tau_1,\tau_2}$ be twist continuous. Then $g$ is order inverting with respect to both $\leq_{\tau_1}$ and $\leq_{\tau_2}$.
\end{lemma}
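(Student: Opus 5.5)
We argue directly from the open-set characterization of the specialization orders, using the twist continuity of $g$ together with the preceding lemma relating $\leq_{\tau_1}$ and $\leq_{\tau_2}$. Here ``twist continuous'' is read as the twist bicontinuity defined earlier: $g^{-1}[U]\in\tau_2$ for $U\in\tau_1$ and $g^{-1}[U]\in\tau_1$ for $U\in\tau_2$. Both specialization orders are partial orders by \cite[Lemma 2.5]{bezhanishvili}, so ``order inverting'' is meaningful.

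\emph{Step 1.} We show that $x\leq_{\tau_1}y$ implies $g(x)\leq_{\tau_2}g(y)$. Suppose $x\leq_{\tau_1}y$ and let $U\in\tau_2$ with $g(x)\in U$. Then $x\in g^{-1}[U]$, and $g^{-1}[U]\in\tau_1$ by twist continuity. Since $x\leq_{\tau_1}y$, we get $y\in g^{-1}[U]$, that is, $g(y)\in U$. Hence $g(x)\leq_{\tau_2}g(y)$.

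\emph{Step 2.} Applying the preceding lemma ($a\leq_{\tau_1}b$ iff $b\leq_{\tau_2}a$) to Step 1, we obtain $g(y)\leq_{\tau_1}g(x)$. So $g$ is order inverting with respect to $\leq_{\tau_1}$.

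\emph{Step 3.} Symmetrically, suppose $x\leq_{\tau_2}y$. For $U\in\tau_1$, the set $g^{-1}[U]$ lies in $\tau_2$, so the same argument gives $g(x)\leq_{\tau_1}g(y)$. The preceding lemma then gives $g(y)\leq_{\tau_2}g(x)$. Alternatively, one can deduce this case from Step 2 by translating through the lemma: $x\leq_{\tau_2}y$ means $y\leq_{\tau_1}x$, so $g(x)\leq_{\tau_1}g(y)$ by Step 2, which means $g(y)\leq_{\tau_2}g(x)$.

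There is no real obstacle here. The only care needed is keeping track of which topology the preimages land in under twist continuity, and applying the lemma in the correct direction.
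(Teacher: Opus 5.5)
Your proof is correct, but it is organized differently from the paper's. The paper handles the $\leq_{\tau_1}$ case in a single pass. For $U\in\tau_1$ containing $g(y)$, it writes the $\tau_2$-open set $g^{-1}[U]$ as a union of members of $\tau_2\cap\delta_1$. It then uses the fact that a $\tau_1$-closed set containing $y$ must contain every $x\leq_{\tau_1}y$. The $\leq_{\tau_2}$ case is dismissed as analogous, and the preceding lemma relating the two specialization orders is never invoked.

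You instead split the proof into two parts. The first is a purely topological fact: a map continuous from $\tau_1$ to $\tau_2$ is monotone from $\leq_{\tau_1}$ to $\leq_{\tau_2}$, which needs no zero-dimensionality at all. The second is the cited lemma that $\leq_{\tau_2}$ is the converse of $\leq_{\tau_1}$.

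The paper's version is self-contained. It effectively re-derives inline the half of that lemma it needs, using only that $\tau_2\cap\delta_1$ is a basis of $\tau_2$, and it never uses pairwise Hausdorffness. Your version is more modular: it shows that the bitopological hypotheses enter only through the relationship between the two orders. It also gets the $\leq_{\tau_2}$ case as a genuine consequence of the $\leq_{\tau_1}$ case rather than by a parallel argument.
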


\begin{proof}

Let $x,y \in X$ with $x \leq_{\tau_1} y$. To show that $g$ is order inverting, we prove $g(y) \leq_{\tau_1} g(x)$. We show that every open set which contains $g(y)$ contains $g(x)$. Let $U \in \tau_1$. Since $g$ is twist continuous, we have $g^{-1}[U] \in \tau_2$. By zero-dimensionality it follows $g^{-1}[U] =\bigcup_{i \in I} U_i$, where $U_i \in \tau_2 \cap \delta_1$. Assume $g(y) \in U$. This means $y \in g^{-1}[U]$. Therefore, there is $i \in I$ such that $y \in U_i$. Since $U_i \in \delta_1$, it follows by definition of the specialization order that $x \in U_i$ as well and thus $x \in  g^{-1}[U]$. Thus $g(x) \in U$ and hence $g(y) \leq_{\tau_1} g(x)$. The argument that $g$ is order inverting with respect to $\leq_{\tau_2}$ runs analogously. 
\end{proof}

If $X$ is a set and $R$ is a binary relation on $X$, let \[R[U]=\{y\in X:xRy\hspace{.1cm}\text{for some $x\in U$}\},\hspace{.2cm}\overline{R}=(X\times X)\setminus R\] so that $R[U]$ is the image of $U$ under $R$ and $\overline{R}$ is the complement of $R$. 

\begin{definition} \label{pairwise S4 De Morgan space}
    A \emph{pairwise S4 De Morgan Stone space} (\emph{PS4D-space}) is a relational bitopological space $\langle X;g,R,\tau_1,\tau_2\rangle$ satisfying the following conditions: 
    \begin{enumerate}
        \item $\tuple{X;\tau_1,\tau_2}$ is a pairwise Stone space;   
        \item $g\colon \langle X; \tau_1, \tau_2 \rangle\to \langle X; \tau_1,\tau_2 \rangle$ is a twist continuous involution;   
        \item $R\subseteq X\times X$ is reflexive and transitive; 
        \item if $U\in\tau_1\cap\delta_2$, then $R[U]\in\tau_1\cap\delta_2$;
        \item if $x\overline{R}y$, there exists $U\in\tau_1\cap\delta_2$ such that $x\in U$ and $y\not\in R[U]$.
    \end{enumerate}
\end{definition} 


\begin{remark}
Let us note that a twist continuous involution on a space $\langle X, \tau_1, \tau_2 \rangle$ is actually a homeomorphism between $\langle X, \tau_1 \rangle$ and $\langle X, \tau_2 \rangle$. This easily follows as any involution is its own inverse, and consequently every continuous involution has a continuous inverse.
\end{remark}

Recall that if $A$ is a bounded lattice, a non-empty subset $x\subseteq A$ is a \emph{filter} provided $x$ is upward closed and closed under finite meets. Moreover, we say that a filter $x$ is \emph{proper} if $x\not=A$, i.e., $0\not\in x$, and \emph{prime} if $x$ is a proper filter such that $a\vee b\in x$ implies $a\in x$ or $b\in x$ for all $a,b\in A$. Observe that for any $a\in A$, we have ${\uparrow}a=\{b\in A:a\leq b\}$ is a filter, known as the \emph{principal filter} generated by $a$. Dually, a non-empty set $x\subseteq A$ is an \emph{ideal} provided $x$ is downwards closed and closed under finite joins. Moreover, we say that an ideal $x$ is \emph{proper} if $x\not=A$, i.e., $1\not\in x$, and \emph{prime} if $x$ is a proper ideal such that $a\wedge b\in x$ implies $a\in x$ or $b\in x$ for all $a,b\in A$. Observe that for any $a\in A$, we have ${\downarrow}a=\{b\in A:b\leq a\}$ is an ideal, known as the \emph{principal ideal} generated by $a$. 

The following Prime Filter Theorem is well-known and will be exploited later on. 
 \begin{theorem}\label{prime filter theorem}
     Let $A$ be a distributive lattice, let $x\subseteq A$ be a filter, and let $y\subseteq A$ be an ideal such that $x\cap y=\emptyset$. Then there exists a prime filter $x_\rho\subseteq A$ such that $x\subseteq x_\rho$ with $y\cap x_\rho=\emptyset$. 
 \end{theorem}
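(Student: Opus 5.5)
The plan is the standard Zorn's lemma argument: take a maximal filter extending $x$ that stays disjoint from $y$, and show that maximality together with distributivity forces it to be prime.

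\emph{Step 1 (existence of a maximal extension).} Consider the family
\[\mathcal{F}=\{z\subseteq A: z \text{ is a filter},\ x\subseteq z,\ z\cap y=\emptyset\},\]
partially ordered by inclusion. It is non-empty, since $x\in\mathcal{F}$. The union of a non-empty chain of filters is again a filter: it is upward closed, and any two elements lie in a common member of the chain, so their meet does too. Such a union still contains $x$ and misses $y$. By Zorn's lemma, $\mathcal{F}$ has a maximal element, which we call $x_\rho$.

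\emph{Step 2 (properness).} Since $y$ is an ideal it is non-empty, and being downward closed it contains $0$. As $x_\rho\cap y=\emptyset$, we get $0\notin x_\rho$, so $x_\rho$ is proper.

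\emph{Step 3 (primeness, the main step).} Suppose $a\vee b\in x_\rho$ but $a\notin x_\rho$ and $b\notin x_\rho$. Consider the filter generated by $x_\rho\cup\{a\}$,
\[x_a=\{c\in A: c\geq d\wedge a \text{ for some } d\in x_\rho\},\]
and define $x_b$ in the same way. Each is a filter strictly containing $x_\rho$, so by maximality each must meet $y$. This gives $d_1,d_2\in x_\rho$ and $c_1,c_2\in y$ with $d_1\wedge a\leq c_1$ and $d_2\wedge b\leq c_2$. Put $d=d_1\wedge d_2\in x_\rho$. Using distributivity,
\[d\wedge(a\vee b)=(d\wedge a)\vee(d\wedge b)\leq c_1\vee c_2 .\]
The element $c_1\vee c_2$ lies in $y$ because $y$ is closed under finite joins. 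On the other hand, $d\wedge(a\vee b)\in x_\rho$, so by upward closure $c_1\vee c_2\in x_\rho$. This contradicts $x_\rho\cap y=\emptyset$, so $x_\rho$ is prime.

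The only place distributivity is used is this final computation, and it is the crux of the whole argument. The rest is routine bookkeeping, namely checking that $x_a$ is a filter and that chains have upper bounds in $\mathcal{F}$.
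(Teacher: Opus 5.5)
Your proof is correct and takes the same route as the paper: the paper gives no argument of its own and cites Gr\"atzer, where the result is proved by this Zorn's lemma argument (take a maximal extension disjoint from the opposite set, and show that distributivity forces it to be prime). One small point concerns Step 2: the statement does not assume $A$ is bounded, so it is cleaner to get $x_\rho\neq A$ directly from $y\neq\emptyset$ and $x_\rho\cap y=\emptyset$ than to appeal to $0\in y$.
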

 \begin{proof}
 Consult Gr\"atzer \cite[pg. 84]{gratzer} for a proof. 
 \end{proof}
\begin{definition}
    Let $A$ be an S4 De Morgan algebra. The \emph{bitopological spectrum} of $A$ is a relational bitopological space $\mathcal{S}_0(A)=\langle\mathfrak{P}(A);g_A;R_A;\tau_+(\beta_+),\tau_-(\beta_-)\rangle$ such that: 
    \begin{enumerate}
        \item $\mathfrak{P}(A)$ is the collection of all prime filters of $A$; 
        \item $g_A\colon\mathfrak{P}(A)\to\mathfrak{P}(A)$ is defined by $g_A(x)=\{a\in A:-a\not\in x\}$; 
        \item $R_A\subseteq\mathfrak{P}(A)\times\mathfrak{P}(A)$ is defined by $xR_Ay$ iff $a\in x$ implies $\nabla a\in y$; 
        \item $\tau_+(\beta_+)$ is the topology on $\mathfrak{P}(A)$ generated by the basis: \[\beta_+=\{\phi_+(a) : a \in A\}\hspace{.2cm}\text{where}\hspace{.2cm}\phi_+(a)=\{x\in\mathfrak{P}(A):a\in x\};\]
        \item $\tau_-(\beta_-)$ is the topology on $\mathfrak{P}(A)$ generated by the basis: 
        \[\beta_-=\{\phi_-(a) : a \in A\}\hspace{.2cm}\text{where}\hspace{.2cm}\phi_-(a)=\{x\in\mathfrak{P}(A):a\not\in x\};\]
    \end{enumerate}
\end{definition}

\begin{lemma}
    If $A$ is an S4 De Morgan algebra, then: 
    \begin{enumerate}
        \item $\phi_-(a\wedge b)=\phi_-(a)\cup\phi_-(b)$; 
        \item $\phi_-(a\vee b)=\phi_-(a)\cap\phi_-(b)$; 
        \item $\phi_-(0)=\mathfrak{P}(A)$ and $\phi_-(1)=\emptyset$. 
    \end{enumerate}
    \end{lemma}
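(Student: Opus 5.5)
The plan is to reduce everything to the corresponding identities for $\phi_+$, using that $\phi_-(a)=\mathfrak{P}(A)\setminus\phi_+(a)$ for every $a\in A$; this is immediate from the definitions. The modal operator $\nabla$ and the involution play no role, so only the bounded distributive lattice reduct of $A$ and the defining properties of prime filters are used.

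First establish the $\phi_+$ identities.
\begin{itemize}
\item For meets, $\phi_+(a\wedge b)=\phi_+(a)\cap\phi_+(b)$. A filter contains $a\wedge b$ iff it contains both $a$ and $b$: one direction uses upward closure, since $a\wedge b\leq a,b$; the other uses closure under finite meets.
\item For joins, $\phi_+(a\vee b)=\phi_+(a)\cup\phi_+(b)$. If $a\in x$ or $b\in x$, then $a\vee b\in x$ by upward closure. Conversely, if $a\vee b\in x$, primeness gives $a\in x$ or $b\in x$.
\item For the bounds, every filter is nonempty and upward closed, so $1\in x$ and hence $\phi_+(1)=\mathfrak{P}(A)$. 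Prime filters are proper, so $0\notin x$ and hence $\phi_+(0)=\emptyset$.
\end{itemize}

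Next, take complements in $\mathfrak{P}(A)$ and apply De Morgan's laws for sets. This gives $\phi_-(a\wedge b)=\mathfrak{P}(A)\setminus(\phi_+(a)\cap\phi_+(b))=\phi_-(a)\cup\phi_-(b)$. Similarly, $\phi_-(a\vee b)=\phi_-(a)\cap\phi_-(b)$. The bounds give $\phi_-(0)=\mathfrak{P}(A)$ and $\phi_-(1)=\emptyset$.

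There is no real obstacle. The only nontrivial ingredient is primeness in the join case, which is exactly what makes the inclusion $\phi_-(a)\cap\phi_-(b)\subseteq\phi_-(a\vee b)$ hold. Alternatively, one can argue directly elementwise: if $a\notin x$ and $b\notin x$, then $a\vee b\notin x$ by primeness.
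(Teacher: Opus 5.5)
Your proof is correct and is essentially the paper's argument. The paper computes elementwise that $a\wedge b\notin x$ iff $a\notin x$ or $b\notin x$, and that $a\vee b\notin x$ iff $a\notin x$ and $b\notin x$ (by primeness), with properness and upward closure handling the bounds; your route through the $\phi_+$ identities and the complement $\phi_-(a)=\mathfrak{P}(A)\setminus\phi_+(a)$ uses exactly the same facts and differs only in presentation.
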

    \begin{proof}
        The calculations of conditions 1 and 2 are straightforward and proceed as follows: 
        \begin{align*}
            \phi_-(a\wedge b)&=\{x\in\mathfrak{P}(A):a\wedge b\not\in x\}
            \\&=\{x\in\mathfrak{P}(A):a\not\in x\hspace{.2cm}\text{or}\hspace{.2cm}b\not\in x\}
            \\&=\{x\in\mathfrak{P}(A):a\not\in x\}\cup\{x\in\mathfrak{P}(A):b\not\in x\}
            \\&=\phi_-(a)\cup\phi_-(b);
        \end{align*}
\begin{align*}
    \phi_-(a\vee b)&=\{x\in\mathfrak{P}(A):a\vee b\not\in x\}
\\&=\{x\in\mathfrak{P}(A):a\not\in x\hspace{.2cm}\text{and}\hspace{.2cm}b\not\in x\}
 \\&=\{x\in\mathfrak{P}(A):a\not\in x\}\cap\{x\in\mathfrak{P}(A):b\not\in x\}
 \\&=\phi_-(a)\cap\phi_-(b).
\end{align*}  
Moreover, since $\mathfrak{P}(A)$ is the collection of all prime filters of $A$, each $x\in\mathfrak{P}(A)$ is proper and hence it is obvious that $\phi_-(0)=\{x\in\mathfrak{P}(A):0\not\in x\}=\mathfrak{P}(A)$ and since each $x\in\mathfrak{P}(A)$ is an upset, $\phi_-(1)=\{x\in\mathfrak{P}(A):1\not\in x\}=\emptyset$, which completes the proof. 
    \end{proof}

We first show that the bitopological spectrum of any S4 De Morgan algebra gives rise to a PS4D-space. 
\begin{lemma}\label{algebra to space}
    If $A$ is an S4 De Morgan algebra, then $\mathcal{S}_0(A)$ is a PS4D-space. 
\end{lemma}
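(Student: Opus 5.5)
We verify the five conditions of Definition \ref{pairwise S4 De Morgan space} in turn for $\mathcal{S}_0(A)$, writing $\tau_+=\tau_1$ and $\tau_-=\tau_2$.

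\textbf{Condition 1: $\mathcal{S}_0(A)$ is a pairwise Stone space.} This depends only on the bounded distributive lattice reduct of $A$, so we can cite \cite{bezhanishvili}. We also sketch a direct argument.
\begin{itemize}
\item Since $\phi_-(a)=\mathfrak{P}(A)\setminus\phi_+(a)$, each $\phi_+(a)$ is $\tau_+$-open and $\tau_-$-closed, and each $\phi_-(a)$ is $\tau_-$-open and $\tau_+$-closed. The sets $\beta_+\subseteq\beta_1$ and $\beta_-\subseteq\beta_2$ are bases, which gives pairwise zero-dimensionality.
\item For pairwise Hausdorffness, take $x\neq y$, say $a\in x\setminus y$. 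Then $x\in\phi_+(a)$, $y\in\phi_-(a)$, and these sets are disjoint.
\item For pairwise compactness, it suffices to check basic covers, since every cover by $\tau_+\cup\tau_-$-open sets refines to one by basic sets. Suppose $\mathfrak{P}(A)=\bigcup_i\phi_+(a_i)\cup\bigcup_j\phi_-(b_j)$ has no finite subcover. Let $F$ be the filter generated by the $b_j$ and $I$ the ideal generated by the $a_i$. If $F\cap I\neq\emptyset$, we would have $b_{j_1}\wedge\dots\wedge b_{j_m}\le a_{i_1}\vee\dots\vee a_{i_n}$, which yields a finite subcover. So $F\cap I=\emptyset$, and Theorem \ref{prime filter theorem} gives a prime filter containing all $b_j$ and no $a_i$. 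That filter is uncovered, a contradiction.
\end{itemize}

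\textbf{Condition 2: $g_A$ is a twist continuous involution.}
\begin{itemize}
\item $g_A(x)$ is a prime filter. If $a\le b$, then $-b\le -a$, so $-b\in x$ would force $-a\in x$; hence $g_A(x)$ is upward closed. Next, $a\wedge b\in g_A(x)$ iff $-a\vee-b\notin x$ iff $-a\notin x$ and $-b\notin x$, using that $x$ is a filter and prime; so $g_A(x)$ is closed under meets and prime. Finally $0\notin g_A(x)$ because $-0=1\in x$.
\item $g_A$ is an involution: $a\in g_A(g_A(x))$ iff $-a\notin g_A(x)$ iff $--a\in x$ iff $a\in x$.
\item Twist continuity follows from the identities $g_A^{-1}[\phi_+(a)]=\{x:-a\notin x\}=\phi_-(-a)$ and $g_A^{-1}[\phi_-(a)]=\phi_+(-a)$. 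Preimages preserve unions, so these identities on basic sets suffice.
\end{itemize}

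\textbf{Condition 3: $R_A$ is reflexive and transitive.} Reflexivity holds because $a\le\nabla a$ and $x$ is upward closed. For transitivity, suppose $xR_Ay$, $yR_Az$ and $a\in x$. Then $\nabla a\in y$, so $\nabla\nabla a=\nabla a\in z$.

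\textbf{Conditions 4 and 5.} First we classify $\tau_1\cap\delta_2$.
\begin{itemize}
\item A $\tau_-$-closed set is $\tau_+$-compact by Proposition \ref{p:compact}.
\item A $\tau_+$-open set is a union of sets $\phi_+(a)$. If it is also $\tau_+$-compact, it is a finite union of these, hence equal to $\phi_+(a_1\vee\dots\vee a_n)$.
\item Therefore $\tau_1\cap\delta_2=\beta_+$, and it suffices to prove the key identity $R_A[\phi_+(a)]=\phi_+(\nabla a)$.
\end{itemize}

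The inclusion $\subseteq$ is immediate from the definition of $R_A$. The reverse inclusion is the main obstacle. Given a prime filter $y$ with $\nabla a\in y$, we must produce a prime $x\ni a$ with $xR_Ay$. The condition $xR_Ay$ says $x\cap\{b:\nabla b\notin y\}=\emptyset$.
\begin{itemize}
\item The set $J=\{b:\nabla b\notin y\}$ is an ideal. It is downward closed because $\nabla$ is monotone, which follows from additivity. It is closed under joins because $\nabla(b\vee c)=\nabla b\vee\nabla c$ and $y$ is prime. It contains $0$ because $\nabla0=0\notin y$.
\item ${\uparrow}a\cap J=\emptyset$: if $a\le b$, then $\nabla a\le\nabla b$, so $\nabla b\in y$ and $b\notin J$.
\item Theorem \ref{prime filter theorem} now yields the required prime $x\supseteq{\uparrow}a$ disjoint from $J$.
\end{itemize}

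Condition 5 is handled by the same separation idea. If $x\overline{R_A}y$, pick $a\in x$ with $\nabla a\notin y$. Then $U=\phi_+(a)$ contains $x$, and by the key identity $R_A[U]=\phi_+(\nabla a)\not\ni y$.
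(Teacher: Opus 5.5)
Your proof is correct. For conditions 1--3 it follows the paper essentially verbatim: the same Hausdorff separation by $\phi_+(a)$ and $\phi_-(a)$, the same filter/ideal compactness argument via the Prime Filter Theorem, and the same computations for $g_A$ and $R_A$.

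You genuinely diverge on the reverse inclusion $\phi_+(\nabla a)\subseteq R_A[\phi_+(a)]$, and there your route is the one that actually works. The paper takes $x$ with $\nabla a\in x$, gets $xR_Ax$ by reflexivity, and then asserts $x\in\phi_+(a)$. That does not follow, since $\nabla a\in x$ does not force $a\in x$. In the paper's own example on $[0,1]$ with $a=\tfrac12$, the prime filter $[\tfrac34,1]$ contains $\nabla a=1$ but not $a$, so the witness cannot be the point itself. Your ideal $J=\{b:\nabla b\notin y\}$ supplies the missing witness: you show it is disjoint from ${\uparrow}a$ using monotonicity and additivity of $\nabla$ and primeness of $y$, then separate the two by the Prime Filter Theorem. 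This is the standard existence-lemma step from canonical models, and it is genuinely needed.

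You also make explicit the identification $\tau_1\cap\delta_2=\beta_+$, using Proposition \ref{p:compact}, finite unions, and $\phi_+(a\vee b)=\phi_+(a)\cup\phi_+(b)$. Condition 4 silently depends on this in the paper. You also note correctly that zero-dimensionality alone only needs $\beta_+\subseteq\beta_1$. Condition 5 in fact only uses the easy inclusion $R_A[\phi_+(a)]\subseteq\phi_+(\nabla a)$.

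The only elision on your side is minor. Your displayed chain for $g_A(x)$ proves closure under meets, but primeness needs the dual chain: $a\vee b\in g_A(x)\iff -a\wedge -b\notin x\iff -a\notin x$ or $-b\notin x$.
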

\begin{proof}
   We first demonstrate that $\langle\mathfrak{P}(A);\tau_+(\beta_+),\tau_-(\beta_-)\rangle$ forms a pairwise Stone space. The proof is analogous to \cite[Proposition 5.1]{bezhanishvili} however we outline the steps for the sake of clarity. To see that $\langle\mathfrak{P}(A);\tau_+(\beta_+),\tau_-(\beta_-)\rangle$ is a pairwise Hausdorff space, take any $x,y\in\mathfrak{P}(A)$ such that $x\not=y$. Without loss of generality, we may assume $x\not\subseteq y$ so that there exists $a\in A$ such that $a\in x$ but $a\not\in y$. Then $x\in\phi_+(a)$ and $y\in\phi_-(a)$ with $\phi_+(a)\in\tau_+(\beta_+)$ and $\phi_-(a)\in\tau_-(\beta_-)$ such that $\phi_+(a)\cap\phi_-(a)=\emptyset$.

   For compactness, we show that every cover of $X$ by elements of $\beta_+\cup\beta_-$ has a finite cover. Let: \[X=\bigcup\{\phi_+(a_i):i\in I\}\cup\bigcup\{\phi_-(b_k):k\in K\}\] for some $a_i,b_k\in A$. Let $x$ be the ideal generated by $\{a_i:i\in I\}$ and let $y$ be the filter generated by $\{b_k:k\in K\}$. Since it can be easily shown by way of the Prime Filter Theorem that $x\cap y\not=\emptyset$, one can find $a_{i_1},\dots a_{i_m}$ and $b_{k_1}, \dots, b_{k_n}$ satisfying $\bigwedge^n_{j=1} b_{k_j}\leq\bigvee^m_{j=1} a_{i_j}$ and hence $\bigcap^n_{j=1} \phi_-(b_{k_j})\subseteq\bigcup^m_{j=1} \phi_+(a_{i_j})$ which implies:
   \[\bigcup^n_{j=1} \phi_-(b_{k_j})\cup\bigcup^m_{j=1} \phi_+(a_{i_j})=X.\] Therefore, $\{\phi_+(a_{i_1}), \dots,\phi_+(a_{i_m}), \phi_-(b_{k_1}),\dots,\phi_-(b_{k_n})\}$ is a finite subcover of $\{\phi_+(a_i):i\in I\}\cup\{\phi_-(b_k):k\in K\}$ which implies that $\langle\mathfrak{P}(A);\tau_+(\beta_+),\tau_-(\beta_-)\rangle$ is pairwise compact.

   To see that $\langle\mathfrak{P}(A);\tau_+(\beta_+),\tau_-(\beta_-)\rangle$ is pairwise zero dimensional, it suffices to demonstrate that $\beta_+=\tau_+\cap\delta_-$. Hence take any $U\in\beta_+$. Then $U\in\tau_+$ and hence $U=\phi(a)$ for some $a\in A$. Since $\phi_+(a)=\mathfrak{P}(A)\setminus\phi_-(a)$ we have $X\setminus U=\phi_-(a)$ so $X\setminus U\in\beta_-$, which implies $U\in\delta_-$ so $U\in\tau_+\cap\delta_-$. The converse inclusion is obtained by compactness together with Proposition \ref{p:compact}. We note that it can be similarly demonstrated that $\beta_-=\tau_-\cap\delta_+$. Hence we conclude that $\langle\mathfrak{P}(A);\tau_+(\beta_+),\tau_-(\beta_-)\rangle$ is pairwise zero-dimensional. Since $\mathcal{S}_0(A)$ is a pairwise Stone space, condition 1 of Definition \ref{pairwise S4 De Morgan space} is satisfied.

    To see that condition 2 of Definition \ref{pairwise S4 De Morgan space} is satisfied, we first check that $g_A$ is well-defined, i.e., that $g_A(x)\in\mathfrak{P}(A)$ for any $x\in\mathfrak{P}(A)$. Hence take any $a\in g_A(x)$ and suppose $a\leq b$. The former gives $-a\not\in x$ and the latter implies $-b\leq-a$. Thus we have $-b\not\in x$ since $x$ is upwards closed and hence $b\in g_A(x)$ so $g_A(x)$ is upward closed for any $x\in\mathfrak{P}(A)$. To see that $g_A(x)$ is closed under finite meets, let $a,b\in g_A(x)$ but $a\wedge b\not\in g_A(x)$. The latter hypothesis together with De Morgan's identities yield $-(a\wedge b)=-a\vee -b\in x$ but the former hypothesis gives $-a\not\in x$ and $-b\not\in x$. Since $x$ is a prime filter, we arrive at a contradiction. Therefore $g_A(x)$ is closed under finite meets for every $x\in\mathfrak{P}(A)$ and is hence a filter. It is clearly proper since otherwise, if $0\in g_A(x)$, then $-0=1\not\in x$, which contradicts the fact that $x$ is upward closed. Hence it remains to verify that $g_A(x)$ is prime. Hence assume $a\vee b\in g_A(x)$ so that $-(a\vee b)=-a\wedge-b\not\in x$. If both $a\not\in g_A(x)$ and $b\not\in g_A(x)$, then $-a,-b\in x$ and since $x$ is closed under meets, we have $-a\wedge-b\in x$, a contradiction. Hence $g_A(x)$ is a prime filter for each $x\in\mathfrak{P}(A)$. Thus $g_A$ is well-defined. We now check that $g_A$ is twist continuous. First note that for any $a\in A$, the definition of $g_A^{-1}$ and the definition of $\phi_+$ yield:  
    \[g_A^{-1}[\phi_+(a)]=\{x\in\mathfrak{P}(A):g_A(x)\in\phi_+(a)\}=\{x\in\mathfrak{P}(A):a\in g_A(x)\}.\]
Then the definitions of $g_A$ and $\phi_-$ then provide the following
\[\{x\in\mathfrak{P}(A):a\in g_A(x)\}=\{x\in\mathfrak{P}(A):-a\not\in x\}=\phi_-(-a)\] where $\phi_-(-a)\in\tau_-(\beta_-)$ and $\phi_+(a)\in\tau_+(\beta_+)$. Similarly, we have 
    \[g_A^{-1}[\phi_-(a)]=\{x\in\mathfrak{P}(A):g_A(x)\in\phi_-(a)\}=\{x\in\mathfrak{P}(A):a\not\in g_A(x)\}.\] Then the definitions of $g_A$ and $\phi_+$ yield 
\[\{x\in\mathfrak{P}(A):a\not\in g_A(x)\}=\{x\in\mathfrak{P}(A):-a\in x\}=\phi_+(-a)\] where $\phi_-(a)\in\tau_-(\beta_-)$ and $\phi_+(-a)\in\tau_+(\beta_+)$. To see that it is an involution, observe that repeated applications of $g_A$ yield the following: 
    \[g_A(g_A(x))=\{a\in A:-a\not\in g_A(x)\}=\{a\in A:--a\in x\}.\] Then by applying the fact $-\colon A\to A$ is an involution, we obtain: 
    \[\{a\in A:--a\in x\}=\{a\in A:a\in x\}=x.\]
Therefore it follows that $g_A$ is a twist continuous involution on $\mathfrak{P}(A)$ and hence condition 2 of Definition \ref{pairwise S4 De Morgan space} is satisfied.

To see that $R_A$ is reflexive, observe that $a\leq\nabla a$ for all $a\in A$ and hence if $a\in x$, then $\nabla a\in x$ so $xR_Ax$ by the definition of $R_A$. To see that $R_A$ is transitive, assume $xR_Ay$ and $yR_Az$. If $a\in x$, then $\nabla a\in y$ since $xR_Az$ and hence $\nabla\nabla a\in z$ since $yR_Az$. However, we have $\nabla\nabla a=\nabla a$ since $\nabla$ is idempotent and hence $\nabla a\in z$ so $xR_Az$, as required. Therefore condition 3 of Definition \ref{pairwise S4 De Morgan space} is satisfied.

   For condition 4, we verify the stronger claim that $\phi_+(\nabla a)=R_A[\phi_+(a)]$. Assume $x\in R_A[\phi_+(a)]$ so that $yR_Ax$ for some $y\in\phi_+(a)$ and hence $a\in y$ and thus $\nabla a\in x$ by the definition of $R_A$. Therefore we have $x\in\phi_+(\nabla a)$ and hence $R_A[\phi_+(a)]\subseteq\phi_+(\nabla a)$. For the other inclusion, suppose $x\in\phi_+(\nabla a)$ so that $\nabla a\in x$. Since $\nabla$ is idempotent, our hypothesis yields $\nabla\nabla a\in x$ and hence $xR_Ax$ with $x\in\phi_+(a)$ so $x\in R_A[\phi_+(a)]$ and thus $\phi_+(\nabla a)\subseteq R_A[\phi_+(a)]$ so $\phi_+(\nabla a)=R_A[\phi_+(a)]$.

   Therefore, since $\phi_+(\nabla a)\in\tau_+$, we have $R_A[\phi_+(a)]\in\tau_+$ and since $\phi_+(a)=\mathfrak{P}(A)\setminus\phi_-(a)$, we have $\phi_+(\nabla a)=\mathfrak{P}(A)\setminus\phi_-(\nabla a)\in\delta_-$ so we conclude that $R_A[\phi_+(a)]\in\tau_+\cap\delta_-$.

   Finally, for condition 5, assume $x\overline{R_A}y$ so that there exists some $a\in A$ such that $a\in x$ but $\nabla a\not\in y$. Then $x\in\phi_+(a)$ and $y\not\in\phi_+(\nabla a)=R_A[\phi_+(a)]$ with $\phi_+(a),R_A[\phi_+(a)]\in\tau_+\cap\delta_-$. Thus we conclude that $\mathcal{S}_0(A)$ is a PS4D-space.        \end{proof}

We must now conversely show that the algebra of $(\tau_1,\delta_2)$-biclopen subsets of a PS4D-space induce an S4 De Morgan algebra.  

\begin{lemma}\label{space to algebra}
    If $X$ is a PS4D-space, then $\mathcal{A}_0(X)=\langle\tau_1\cap\delta_2;\cap,\cup,^*,\emptyset,X,\nabla_R\rangle$ is an S4 De Morgan algebra under $U^*=\{x\in X:g(x)\not\in U\}$ and $\nabla_RU=R[U]$ for $U\in\tau_1\cap\delta_2$.    \end{lemma}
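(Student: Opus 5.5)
The plan is to verify each clause of the definition of an S4 De Morgan algebra directly for $\mathcal{A}_0(X)$, using the five conditions of Definition \ref{pairwise S4 De Morgan space}. The order is: the bounded distributive lattice structure, then the involution ${}^*$, then the operator $\nabla_R$.

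\emph{Lattice structure.} The family $\tau_1\cap\delta_2$ contains $\emptyset$ and $X$. It is closed under finite unions and finite intersections, because both $\tau_1$ and $\delta_2$ are. Since the operations are set-theoretic union and intersection, distributivity is inherited from $\wp(X)$. So $\langle\tau_1\cap\delta_2;\cap,\cup,\emptyset,X\rangle$ is a bounded distributive lattice.

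\emph{The involution.} First rewrite $U^*=X\setminus g^{-1}[U]$. The main point, and the step I expect to need the most care, is that ${}^*$ maps $\tau_1\cap\delta_2$ into itself. For this I use twist continuity on both the open and the closed side.
\begin{itemize}
\item If $U\in\tau_1$, then $g^{-1}[U]\in\tau_2$.
\item If $U\in\delta_2$, then $X\setminus U\in\tau_2$, so $g^{-1}[X\setminus U]=X\setminus g^{-1}[U]\in\tau_1$, i.e.\ $g^{-1}[U]\in\delta_1$.
\end{itemize}
Hence $g^{-1}[U]\in\tau_2\cap\delta_1$, and taking complements gives $U^*\in\delta_2\cap\tau_1$. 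The remaining properties are routine.
\begin{itemize}
\item Since $g$ is an involution, $g^{-1}=g$ and $g^{-1}[g^{-1}[U]]=U$. Hence $U^{**}=X\setminus g^{-1}[X\setminus g^{-1}[U]]=g^{-1}[g^{-1}[U]]=U$.
\item Since $g^{-1}$ preserves unions and intersections and complementation swaps them, we get $(U\cap V)^*=U^*\cup V^*$ and $(U\cup V)^*=U^*\cap V^*$.
\end{itemize}

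\emph{The operator $\nabla_R$.} Condition 4 gives exactly that $\nabla_RU=R[U]\in\tau_1\cap\delta_2$ whenever $U\in\tau_1\cap\delta_2$.
\begin{itemize}
\item Normality and additivity hold for relational images in general: $R[\emptyset]=\emptyset$ and $R[U\cup V]=R[U]\cup R[V]$.
\item Reflexivity of $R$ gives $U\subseteq R[U]$, so $\nabla_R$ is increasing.
\item Transitivity gives $R[R[U]]\subseteq R[U]$, and reflexivity gives the reverse inclusion. Hence $\nabla_R\nabla_RU=\nabla_RU$.
\end{itemize}
Together these show that $\mathcal{A}_0(X)$ is an S4 De Morgan algebra.

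Condition 5 is not needed for this lemma. I expect it to be used only later, to separate points in the representation.
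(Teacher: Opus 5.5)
Your proposal is correct and follows essentially the same route as the paper. Both arguments show that $\tau_1\cap\delta_2$ is closed under ${}^*$ by applying twist continuity on both the open and the closed side, obtain $U^{**}=U$ from $g(g(x))=x$, and get the closure-operator axioms for $\nabla_R$ from condition 4 together with the reflexivity and transitivity of $R$. The differences are cosmetic: you verify the De Morgan identities directly where the paper checks that ${}^*$ is order-inverting (equivalent once ${}^{**}$ is the identity), and you rightly cite condition 4 for $R[U]\in\tau_1\cap\delta_2$, where the paper's text says condition 3.
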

\begin{proof}
    Obviously $\emptyset,X\in \tau_1\cap\delta_2$ and $U\cap V$, $U\cup V\in\tau_1\cap\delta_2$ if $U,V\in\tau_1\cap\delta_2$ and by condition 3 of Definition \ref{pairwise S4 De Morgan space} we have $R[U]=\nabla_R U\in\tau_1\cap\delta_2$ if $U\in\tau_1\cap\delta_2$. To see $U^*\in\tau_1\cap\delta_2$, we first note that $U^*=\{x\in X:g(x)\not\in U\}=X\setminus g^{-1}[U]$. Let $U\in\tau_1\cap\delta_2$, so that $U\in\tau_1$ and $U\in\delta_2$. Since $g$ is twist continuous, we have $g^{-1}[U]\in\tau_2$ so $X\setminus g^{-1}[U]\in\delta_2$. Similarly, we have $g^{-1}[U]\in \delta_1$ so $X\setminus g^{-1}[U]\in\tau_1$ and hence $U^*=X\setminus g^{-1}[U]\in\tau_1\cap\delta_2$. Hence the operations associated with $\mathcal{A}_0(X)$ are well-defined. It is easy to see that $\langle\tau_1\cap\delta_2;\cap,\cup,\emptyset,X\rangle$ is a bounded distributive lattice and hence it remains to verify that $^*\colon\tau_1\cap\delta_2\to\tau_1\cap\delta_2$ is an order-inverting involution with respect to subset inclusion and that $\nabla_R\colon\tau_1\cap\delta_2\to\tau_1\cap\delta_2$ is a closure operator. To see that $^*$ is an order-inverting involution, first let $U,V\in\tau_1\cap\delta_2$ be such that $U\subseteq V$. If $x\in V^*$, then $g(x)\not\in V$ and hence $g(x)\not\in U$. This implies that $x\in U^*$ and thus $V^*\subseteq U^*$. Now assume $x\in U^{**}$, then $g(x)\not\in U^*$ and hence $g(g(x))\in U$. Since $g(g(x))=x$ for all $x\in X$, we have $x\in U$ so $U^{**}\subseteq U$. Conversely, if $x\in U$ but $x\not\in U^{**}$, then $g(x)\in U^*$ so $g(g(x))\not\in U$ but $g(g(x))=x$ so $x\not\in U$, which contradicts our hypothesis.

The result that $\nabla_R$ is normal is obvious since $\nabla_R
\emptyset=R[\emptyset]=\emptyset$. To see that $\nabla_R$ is additive, assume $x\in \nabla_R(U\cup V)=R[U\cup V]$. Then $yRx$ for some $y\in U\cup V$. If $y\in U$, then $x\in R[U]=\nabla_RU$ so $x\in\nabla_R U\cup\nabla_RV$. If $y\in V$, then $x\in R[V]=\nabla_RV$ so $x\in \nabla_RU\cup\nabla_RV$. Hence we have $\nabla_R(U\cup V)\subseteq\nabla_R U\cup\nabla_RV$. Conversely, suppose $x\in\nabla_RU\cup\nabla_RV=R[U]\cup R[V]$. If $x\in R[U]$, then $yRx$ for some $y\in U$ so $y\in U\cup V$ and hence $x\in R[U\cup V]=\nabla_R(U\cup V)$. If $x\in R[V]$, then $yRx$ for some $y\in V$ so $y\in U\cup V$ and hence $x\in R[U\cup V]=\nabla_R(U\cup V)$. Hence we conclude $\nabla_R(U\cup V)\supseteq\nabla_RU\cup\nabla_RV$ and therefore $\nabla_R(U\cup V)=\nabla_RU\cup\nabla_RV$.    
    
    To verify that $\nabla_R$ is increasing, suppose $x\in U$. Then the reflexivity of $R$ yields $xRx$ and hence $x\in R[U]=\nabla_RU$ so $U\subseteq\nabla_RU$. This inclusion implies $\nabla_RU\subseteq\nabla_R\nabla_RU$ and hence we verify $\nabla_R\nabla_RU\subseteq\nabla_RU$ to also see that $\nabla_R$ is idempotent. If $x\in \nabla_R\nabla_RU=R[R[U]]$, then $yRx$ for some $y\in R[U]$ where $zRy$ for some $z\in U$. Since $R$ is transitive, we have $zRx$ with $z\in U$ and hence $x\in R[U]=\nabla_RU$.      
\end{proof}

\begin{theorem}\label{rep thm}
    Every S4 De Morgan algebra $A$ is isomorphic to $\mathcal{A}_0(\mathcal{S}_0(A))$. 
\end{theorem}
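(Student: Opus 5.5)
We will show that the map $\phi_+\colon A\to\mathcal{A}_0(\mathcal{S}_0(A))$, $a\mapsto\phi_+(a)$, is an isomorphism of S4 De Morgan algebras. By Lemma \ref{algebra to space}, $\mathcal{S}_0(A)$ is a PS4D-space, so $\mathcal{A}_0(\mathcal{S}_0(A))$ is an S4 De Morgan algebra by Lemma \ref{space to algebra}. In the proof of Lemma \ref{algebra to space} we saw that $\phi_+(a)\in\tau_+\cap\delta_-$, so $\phi_+$ is well defined as a map into $\tau_+\cap\delta_-$.

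\textbf{Preservation of operations.} The lattice part is the usual Stone argument. We have $\phi_+(a\wedge b)=\phi_+(a)\cap\phi_+(b)$ because filters are closed under meets and are upward closed. We have $\phi_+(a\vee b)=\phi_+(a)\cup\phi_+(b)$ by primeness. Finally, $\phi_+(0)=\emptyset$ and $\phi_+(1)=\mathfrak{P}(A)$ because prime filters are proper and nonempty.

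For the involution, we compute
\[\phi_+(a)^*=\{x:g_A(x)\notin\phi_+(a)\}=\{x:a\notin g_A(x)\}=\{x:-a\in x\}=\phi_+(-a).\]
The third equality is the identity $g_A^{-1}[\phi_-(a)]=\phi_+(-a)$ already established in Lemma \ref{algebra to space}.

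For the modal operator, Lemma \ref{algebra to space} proved that $R_A[\phi_+(a)]=\phi_+(\nabla a)$, so $\nabla_{R_A}\phi_+(a)=\phi_+(\nabla a)$. That proof of the inclusion $\phi_+(\nabla a)\subseteq R_A[\phi_+(a)]$ used only $xR_Ax$ and $\nabla a\in x$, so it actually needs $a\in x$. If the inclusion is not accepted as proved, we would redo it properly. Given a prime $x\ni\nabla a$, we must find a prime $y\ni a$ with $yR_Ax$. To do this, take the filter ${\uparrow}a$ and the ideal $I=\{b:\nabla b\notin x\}$. The set $I$ is downward closed, since $\nabla$ is monotone by additivity. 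It is closed under joins, since $\nabla(b\vee c)=\nabla b\vee\nabla c$ and $x$ is prime. It contains $0$. It is disjoint from ${\uparrow}a$, since $a\le b$ gives $\nabla a\le\nabla b\in x$. Theorem \ref{prime filter theorem} then gives a prime $y\supseteq{\uparrow}a$ with $y\cap I=\emptyset$, which means exactly $yR_Ax$. I expect this step to be the main point requiring care.

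\textbf{Injectivity.} If $a\ne b$, say $a\not\le b$, then ${\uparrow}a\cap{\downarrow}b=\emptyset$. Theorem \ref{prime filter theorem} yields a prime filter containing $a$ but not $b$, so $\phi_+(a)\ne\phi_+(b)$.

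\textbf{Surjectivity.} Let $U\in\tau_+\cap\delta_-$. Since $U$ is $\tau_+$-open, it is a union of basic sets $\phi_+(a_i)$. Since $U$ is $\delta_-$-closed, Proposition \ref{p:compact} makes it $\tau_+$-compact, because pairwise compactness gives $\delta_-\subseteq\sigma_+$. Hence finitely many $\phi_+(a_i)$ suffice, and $U=\phi_+(a_1\vee\dots\vee a_n)$ because $\phi_+$ preserves joins. Thus $\phi_+$ is a bijective homomorphism, hence an isomorphism.
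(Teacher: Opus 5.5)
Your proof is correct and follows the same outline as the paper's. It uses the same map $\phi_+$, the same Prime Filter Theorem argument for injectivity, and the same computations for the lattice operations and for $\phi_+(-a)=\phi_+(a)^*$. Surjectivity again comes from pairwise compactness; the paper only cites Lemma~\ref{algebra to space} for it, whereas you spell out the finite-subcover argument via $\delta_-\subseteq\sigma_+$.

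The one place where you genuinely depart from the paper is the modal step, and your version is the right one. The paper simply recalls $R_A[\phi_+(a)]=\phi_+(\nabla a)$ from Lemma~\ref{algebra to space}. However, the argument given there for $\phi_+(\nabla a)\subseteq R_A[\phi_+(a)]$ uses $x$ itself as the witness, which requires $a\in x$ and not merely $\nabla a\in x$. This fails already in the paper's $[0,1]$ example: the prime filter $x=[\tfrac34,1]$ contains $\nabla\tfrac12=1$ but not $\tfrac12$. Your separation of ${\uparrow}a$ from the ideal $\{b:\nabla b\notin x\}$ is the standard correct argument, and it is genuinely needed.

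This fix also repairs condition~4 in Lemma~\ref{algebra to space}, together with your surjectivity argument showing that every $(\tau_+,\delta_-)$-biclopen set is some $\phi_+(a)$. So your opening appeal to Lemmas~\ref{algebra to space} and~\ref{space to algebra} is justified once that step is in place. The only presentational adjustment is to establish the corrected identity and the surjectivity fact before invoking those lemmas, rather than after.
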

\begin{proof}
  We show that $\phi_+$ exhibits an isomorphism from $A$ to $\mathcal{A}_0(\mathcal{S}_0(A))$. For injectivity, take $a,b\in A$ and suppose $a\not=b$. Without loss of generality, suppose $a\not\leq b$. Then consider the principal filter generated by $a$, i.e., ${\uparrow}a=\{c\in A:a\leq c\}$ and the principal ideal generated by $b$, i.e., ${\downarrow}b=\{c\in A:c\leq b\}$. Since clearly we have ${\uparrow}a\cap{\downarrow}b=\emptyset$, it follows by the Prime Filter Theorem i.e., Theorem \ref{prime filter theorem}, that there exists a prime filter $x_\rho\in\mathfrak{P}(A)$ such that ${\uparrow}a\subseteq x_\rho$ with $x_\rho\cap{\downarrow}b=\emptyset$. Since $a\in x_\rho$ and $b\not\in x_\rho$, we have $x_\rho\in\phi_+(a)$ but $x_\rho\not\in\phi_+(b)$ and hence $\phi_+(a)\not\subseteq\phi_+(b)$ so $\phi_+(a)\not=\phi_+(b)$. Now observe that: 
  \begin{align*}
      \phi_+(-a)&=\{x\in\mathfrak{P}(A):-a\in x\}=\{x\in\mathfrak{P}(A):a\not\in g_A(x)\}\\&=\{x\in\mathfrak{P}(A):g_A(x)\not\in\phi_+(a)\}=\phi_+(a)^*.
  \end{align*}
  Recall from the proof of Lemma \ref{algebra to space} that $\phi_+(\nabla a)=R_A[\phi_+(a)]$ and since $\nabla_{R_A}\phi_+(a)=R_A[\phi_+(a)]$ by our construction of $\mathcal{A}_0(\mathcal{S}_0(A))$, we have $\phi_+(\nabla a)=\nabla_{R_A}\phi_+(a)$. Thus $\phi_+$ is a homomorphism for $-$ and $\nabla$. The proof that $\phi_+$ is a homomorphism for meets is standard: 
\begin{align*}
    \phi_+(a\wedge b)&=\{x\in\mathfrak{P}(A):a\wedge b\in x\}\\&=\{x\in\mathfrak{P}(A):a\in x\}\cap\{x\in\mathfrak{P}(A):b\in x\}\\&=\phi_+(a)\cap\phi_+(b).
\end{align*}
 The proof that $\phi_+$ is a homomorphism for joins runs dually to the case of meets:
\begin{align*}
    \phi_+(a\vee b)&=\{x\in\mathfrak{P}(A):a\vee b\in x\}\\&=\{x\in\mathfrak{P}(A):a\in x\}\cup\{x\in\mathfrak{P}(A):b\in x\}\\&=\phi_+(a)\cup\phi_+(b).
\end{align*}
Clearly $\phi_+(0)=\{x\in\mathfrak{P}(A):0\in x\}=\emptyset$ since each $x\in\mathfrak{P}(A)$ is a prime filter and hence proper. Moreover, we have $\phi_+(1)=\{x\in\mathfrak{P}(A):1\in x\}=\mathfrak{P}(A)$ since prime filters (and indeed, arbitrary filters) are upward closed. Therefore $\phi_+$ is a homomorphic embedding from $A$ to $\mathcal{A}_0(\mathcal{S}_0(A))$. Lastly, note that by Lemma \ref{algebra to space}, each $(\tau_1,\delta_2)$-biclopen set in $\mathcal{S}_0(A)$ is of the form $\phi_+(a)$ for some $a\in A$ and hence $\phi_+$ is a surjection and thus $\phi_+$ is an isomorphism.      
\end{proof}

\section{Duality} \label{sec:duality}
 
\subsection{Algebraic Realization of PS4D-Spaces}
In this subsection, we provide an algebraic realization result by showing that every PS4D-space $X$ is homeomorphic and relationally isomorphic to the prime filter spectrum $\mathcal{S}_0(\mathcal{A}_0(X))$ of the algebra $\mathcal{A}_0(X)$ of $(\tau_1,\delta_2)$-biclopen subsets of $X$.   

The following well-known result in topology will be used in the proof of Theorem \ref{algebraic realization}. 

\begin{lemma}\label{topology lemma}
Let $X$ be a compact Hausdorff space and let $X'$ be a Hausdorff space. If $f\colon X\to X'$ is a continuous bijection, then $f$ is a homeomorphism. 
\end{lemma}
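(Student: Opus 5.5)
The standard route is to show that $f$ is a closed map; a continuous closed bijection is a homeomorphism, since then $(f^{-1})^{-1}[C]=f[C]$ is closed for every closed $C\subseteq X$, i.e. $f^{-1}$ is continuous. Bijectivity already gives that $f^{-1}$ exists as a function, so all the work is in closedness.

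To prove that $f$ is closed, take a closed set $C\subseteq X$ and argue in three steps.

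\begin{enumerate}
\item Since $X$ is compact, the closed subset $C$ is compact: any open cover of $C$, together with the open set $X\setminus C$, covers $X$, so it has a finite subcover, and discarding $X\setminus C$ leaves a finite subcover of $C$.
\item Since $f$ is continuous, $f[C]$ is compact in $X'$: an open cover of $f[C]$ pulls back under $f$ to an open cover of $C$, and a finite subcover of the latter yields a finite subcover of the former.
\item Since $X'$ is Hausdorff, the compact set $f[C]$ is closed. Fix $y\notin f[C]$. For each $z\in f[C]$ choose disjoint open sets $U_z\ni z$ and $V_z\ni y$. Finitely many $U_{z_1},\dots,U_{z_n}$ cover $f[C]$, and then $V_{z_1}\cap\dots\cap V_{z_n}$ is an open neighbourhood of $y$ disjoint from $f[C]$. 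Hence the complement of $f[C]$ is open.
\end{enumerate}

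Combining the three steps, $f[C]$ is closed for every closed $C$, so $f^{-1}$ is continuous and $f$ is a homeomorphism. Note that the Hausdorffness of $X$ itself plays no role; only the compactness of $X$ and the Hausdorffness of $X'$ are used. There is no real obstacle here. The only step requiring an actual argument is the third, the finite-intersection construction, which is the classical fact that compact subsets of Hausdorff spaces are closed.
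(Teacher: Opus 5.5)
Your proof is correct. It is the standard argument: $f$ is closed because closed subsets of a compact space are compact, continuous images of compact sets are compact, and compact subsets of a Hausdorff space are closed. The paper does not prove this lemma itself but cites the same textbook argument from Davey and Priestley (Lemma A.7), and your remark that the Hausdorffness of $X$ is never used is also right.
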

\begin{proof}
See Davey and Priestley \cite[Lemma A.7, p. 277]{davey}. 
\end{proof}

\begin{theorem}\label{algebraic realization}
    Every PS4D-space $X$ is homeomorphic to $\mathcal{S}_0(\mathcal{A}_0(X))$.
\end{theorem}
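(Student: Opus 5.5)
The plan is to define $\varepsilon\colon X\to\mathfrak{P}(\mathcal{A}_0(X))$ by $\varepsilon(x)=\{U\in\tau_1\cap\delta_2 : x\in U\}$. We then show that $\varepsilon$ is a bihomeomorphism which also commutes with the involutions ($\varepsilon\circ g=g_{\mathcal{A}_0(X)}\circ\varepsilon$) and preserves and reflects the relations ($xRy$ iff $\varepsilon(x)R_{\mathcal{A}_0(X)}\varepsilon(y)$). The argument has four steps.

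\textbf{Well-definedness.} We check that $\varepsilon(x)$ is a prime filter. It is an upset, it is closed under $\cap$, it contains $X$ and omits $\emptyset$, and primeness is immediate because joins are unions.

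\textbf{Injectivity.} This is Lemma \ref{l:injectivity}. If $x\neq y$, there are $U\in\beta_1$ and $V\in\beta_2$ with $x\in U$, $y\in V$ and $U\cap V=\emptyset$. Then $U\in\varepsilon(x)\setminus\varepsilon(y)$.

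\textbf{Surjectivity.} This is the main obstacle. Given a prime filter $P$ of $\mathcal{A}_0(X)$, we look at the family $\{U: U\in P\}\cup\{X\setminus V: V\in\tau_1\cap\delta_2,\ V\notin P\}$. Each $U\in P$ lies in $\delta_2$, and each $X\setminus V$ lies in $\delta_1$. The family has the finite intersection property. Indeed, if $U_1\cap\dots\cap U_n\subseteq V_1\cup\dots\cup V_m$, then the meet $U_1\cap\dots\cap U_n$ lies in $P$, so the join $V_1\cup\dots\cup V_m$ lies in $P$, and primeness puts some $V_j$ in $P$, a contradiction. By pairwise compactness (in the form of Proposition \ref{p:compact}, or directly by complementing to obtain a cover by sets in $\tau_1\cup\tau_2$), the whole intersection is nonempty. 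Any point $x$ in it satisfies $\varepsilon(x)=P$.

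\textbf{Bicontinuity.} We compute $\varepsilon^{-1}[\phi_+(U)]=U$ and $\varepsilon^{-1}[\phi_-(U)]=X\setminus U$. This shows that $\varepsilon$ is bicontinuous, because $U\in\tau_1$ and $X\setminus U\in\tau_2$. Since $\varepsilon$ is a bijection, the same computations read as $\varepsilon[U]=\phi_+(U)$ and $\varepsilon[X\setminus U]=\phi_-(U)$. So $\varepsilon$ maps the bases $\beta_1$ and $\beta_2$ (which are bases by pairwise zero-dimensionality) onto the bases $\beta_+$ and $\beta_-$, hence it is open in both topologies. This gives bicontinuity of $\varepsilon^{-1}$ directly, so Lemma \ref{topology lemma} is not needed.

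\textbf{Compatibility with $g$.} We have $U\in g_{\mathcal{A}_0(X)}(\varepsilon(x))$ iff $U^*\notin\varepsilon(x)$ iff $x\notin U^*$ iff $g(x)\in U$ iff $U\in\varepsilon(g(x))$.

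\textbf{Compatibility with $R$.} Suppose first that $xRy$ and $x\in U$. Then $y\in R[U]=\nabla_RU$, so $\varepsilon(x)R_{\mathcal{A}_0(X)}\varepsilon(y)$. Conversely, suppose $x\overline{R}y$. Condition 5 of Definition \ref{pairwise S4 De Morgan space} supplies $U\in\tau_1\cap\delta_2$ with $x\in U$ and $y\notin R[U]$. So $U\in\varepsilon(x)$ while $\nabla_RU\notin\varepsilon(y)$, and the relation fails between $\varepsilon(x)$ and $\varepsilon(y)$.

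Of these steps, only surjectivity uses real structure: pairwise compactness through the finite-intersection argument. Everything else is bookkeeping with the definitions.
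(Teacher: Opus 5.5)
Your proof is correct, and up to the last step it is the paper's proof. You use the same map (the paper's $\psi$) and get injectivity from Lemma \ref{l:injectivity}. Your surjectivity family is also the paper's: its $F\cup G$, with $G=\{H\in\beta_2 : X\setminus H\notin F\}$, is exactly your $P\cup\{X\setminus V : V\in\beta_1,\ V\notin P\}$. Your explicit finite-intersection check, and your remark that nonemptiness of the intersection already forces $\varepsilon(x)=P$, fill in what the paper calls ``easy to see''.

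The genuine difference is how you get continuity of the inverse.

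\emph{The paper's route.} After the same preimage computations, the paper appeals to a pairwise analogue of Lemma \ref{topology lemma}: a bicontinuous bijection from a pairwise compact, pairwise Hausdorff space onto a pairwise Hausdorff space is a bihomeomorphism. Lemma \ref{topology lemma} does not apply verbatim, because $\langle X;\tau_1\rangle$ alone is in general only $T_0$. So this analogue needs its own short argument. One such argument: $\delta_1\subseteq\sigma_2$ by Proposition \ref{p:compact}, $\tau_2$-compact sets are $\tau_1$-closed in a pairwise Hausdorff space, and symmetrically for the other topology.

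\emph{Your route.} You use bijectivity to read the preimage identities as $\varepsilon[U]=\phi_+(U)$ and $\varepsilon[X\setminus U]=\phi_-(U)$. Thus $\varepsilon$ carries the bases $\beta_1$ and $\beta_2=\{X\setminus U : U\in\beta_1\}$ onto $\beta_+$ and $\beta_-$. By pairwise zero-dimensionality, $\varepsilon$ is then open in both topologies.

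\emph{Comparison.} Your argument is shorter, self-contained, and needs no compactness or separation at this step; compactness enters only through surjectivity. The paper's route, once the pairwise lemma is supplied, works for any bicontinuous bijection without knowing how the bases correspond.

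Your verification of compatibility with $g$ and $R$ is also correct and coincides with the paper's separate Theorem \ref{relational isomorphism}.
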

\begin{proof}
We show that $\psi(x)=\{U\in\tau_1\cap\delta_2:x\in U\}$ exhibits the desired homeomorphism. We first show that $\psi$ is well-defined by demonstrating that $\psi(x)$ is a prime filter in $\mathcal{A}_0(X)$ for any $x\in X$. We will assume $U,V \in \tau_1 \cap \delta_2$. We will show that $\psi(x)$ is a filter. Let  us assume $U,V \in \psi(x)$. By definition $x \in U$ and $x \in V$ hence $x \in U \cap V$ and thus $U \cap V \in \psi(x)$. Therefore, $\psi$ is closed under finite meets. Now let $U \in \psi(x)$ and $U \subseteq V$. Since $x \in U$ also $x \in V$ and thus $V \in \psi(x)$. Hence, $\psi$ is also closed under upsets and thus $\psi(x)$ is a filter. To show that $\psi(x)$ is prime we assume $U \cup V \in \psi(x)$. Since $x \in U \cup V$ it follows that $x \in U$ or $x \in V$. Therefore, $U \in \psi(x)$ or $V \in \psi(x)$. This shows that $\psi(x)$ is prime.

To show that $\psi$ is injective we assume $x,y \in X$ with $x\not=y$.
    Since $X$ is pairwise zero-dimensional and pairwise Hausdorff, it follows by Lemma \ref{l:injectivity} there is $U \in \tau_1 \cap \delta_2$ and $V \in \tau_2 \cap \delta_1$ such that $x \in U$ and $y \notin U$. Thus, $U \in \psi(x)$, $U  \notin \psi(y)$ and hence $\psi(x) \neq \psi(y)$.

To see that $\psi$ is surjective, let $F$ be a prime filter of $\beta_1$ and let $G=\{H\in\beta_2:X\setminus H\not\in F\}$. It is easy to see that $G$ is a prime filter of $\beta_2$ and that $F\cup G$ satisfies the finite intersection property. Since $X$ is a pairwise compact space as well as a pairwise Hausdorff space, there exists some $x\in X$ satisfying $\bigcap(F\cup G)=\{x\}$, and hence $\psi(x)=F$ so $\psi$ is surjective as $F$ was chosen arbitrarily.  

It remains to prove continuity. Let us recall that for $U\in\mathcal{A}_0(X)$, sets of the form $\varphi_+(U)$ are basic open in $\tau_+$ and sets of the form $\varphi_-(U)$ are basic open in $\tau_-$. We have: 
$$\psi^{-1}[\varphi_+(U)]=\{x \in X : \psi(x) \in \varphi_+(U)\}=\{x \in X : U \in \psi(x)\}=\{x \in X : x \in U\}=U$$
where $U\in\tau_1$. Analogously, we have: 
\[\psi^{-1}[\phi_-(U)]=\{x\in X:\psi(x)\in \phi_-(U)\}=\{x\in X:U\not\in\psi(x)\}=\{x\in X:x\not\in U\}=X\setminus U\] where $X\setminus U\in\tau_2$. It is easy to see that by Lemma \ref{topology lemma}, if $Y$ is a pairwise compact and pairwise Hausdorff space, $Z$ is a pairwise Hausdorff space, and $f\colon Y\to Z$ is a bicontinuous bijection, then $f$ is a bihomeomorphism. Thus since $X$ is a pairwise compact and pairwise Hausdorff, $\mathcal{S}_0(\mathcal{A}_0(X))$ is pairwise Hausdorff, and $\psi$ has been shown to be a bicontinuous bijection from $X$ to $\mathcal{S}_0(\mathcal{A}_0(X))$, it follows that $\psi$ is a bihomeomorphism. 
\end{proof}

\begin{theorem}\label{relational isomorphism}
    Let $X$ be a PS4D-space. Then: 
    \begin{enumerate}
        \item $xRy\Longleftrightarrow\psi(x)R\psi(y)$; 
        \item $\psi(g(x))=g(\psi(x))$. 
    \end{enumerate}
\end{theorem}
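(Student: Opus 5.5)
For (1) I will unfold the definition of $R_{\mathcal{A}_0(X)}$ on $\mathcal{S}_0(\mathcal{A}_0(X))$. It says that $\psi(x)R\psi(y)$ holds iff for every $U\in\tau_1\cap\delta_2$, $x\in U$ implies $R[U]\in\psi(y)$, that is, $y\in R[U]$. Here $\nabla_R U=R[U]$ lies in $\tau_1\cap\delta_2$ by condition 4 of Definition \ref{pairwise S4 De Morgan space}, so this membership condition makes sense.

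For the forward direction, assume $xRy$ and take $U\in\tau_1\cap\delta_2$ with $x\in U$. Then $y\in R[U]$ by the definition of the image $R[U]$, so $R[U]\in\psi(y)$.

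For the converse I argue by contraposition, and this is the only place where the real axiom is used. Suppose $x\overline{R}y$. Condition 5 of Definition \ref{pairwise S4 De Morgan space} supplies $U\in\tau_1\cap\delta_2$ with $x\in U$ and $y\notin R[U]$. Then $U\in\psi(x)$ but $\nabla_R U\notin\psi(y)$, so $\psi(x)\overline{R}\psi(y)$. The only point to watch is that condition 5 is stated exactly in the form the spectrum relation needs; otherwise this direction is immediate.

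For (2) I compare the two prime filters of $\mathcal{A}_0(X)$ elementwise. The map $g$ on the left is the given involution on $X$, while on the right it is $g_{\mathcal{A}_0(X)}$, so
\[g_{\mathcal{A}_0(X)}(\psi(x))=\{U\in\tau_1\cap\delta_2: U^*\notin\psi(x)\}.\]
Now $U^*\notin\psi(x)$ iff $x\notin U^*$ iff $g(x)\in U$ iff $U\in\psi(g(x))$. The fact that $U^*$ is again in $\tau_1\cap\delta_2$ was checked in Lemma \ref{space to algebra}, so all these sets live in the right algebra. Hence $\psi(g(x))=g_{\mathcal{A}_0(X)}(\psi(x))$.

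Neither part needs compactness or the bihomeomorphism property from Theorem \ref{algebraic realization}. Both are pure definition-chasing, with the contrapositive step via condition 5 being the only substantive point.
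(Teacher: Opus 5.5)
Your proposal is correct and follows essentially the same route as the paper. For part (1), condition 5 of Definition \ref{pairwise S4 De Morgan space} handles the nontrivial direction. The paper phrases both directions contrapositively via $\overline{R}$, whereas you prove the easy direction directly from the definition of $R[U]$. For part (2), you use the same chain $U^*\notin\psi(x)\iff x\notin U^*\iff g(x)\in U\iff U\in\psi(g(x))$.
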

\begin{proof}

If $x\overline{R}y$ holds in $X$ then by Definition \ref{pairwise S4 De Morgan space}(5) it follows there is $U \in \tau_1 \cap \delta_2$ such that $x \in U$ and $y \notin R[U]$. Alternatively, this can be rewritten as there is $U \in \tau_1 \cap \delta_2$ such that $U \in \psi(x)$ and $\nabla_R U \notin \psi(y)$
, which implies that $\psi(x)\overline{R}\psi(y)$. Conversely, if $\psi(x)\overline{R}\psi(y)$, then there exists some $U \in \psi(x)$ such that $\nabla_R U \notin \psi(y)$, meaning $x \in U$ and $y \notin R[U]$. Thus $x\overline{R}y$.
Therefore, we showed that $x\overline{R}y$ in $X$ if and only if $\psi(x)\overline{R}\psi(y)$ holds in $\mathcal{S}_0(\mathcal{A}_0(X))$.

Also, by the definition of $g$ and $U^*$ it follows
\begin{align*}
g(\psi(x))&=\{U \in \mathcal{A}_0(X): U^*\notin \psi(x)\}=\{U \in \mathcal{A}_0(X): x\notin U^*\}\\&=\{U \in \mathcal{A}_0(X): g(x)\in U\}=\psi(g(x)).
\end{align*}
This completes the proof. 
\end{proof}

\subsection{The main result}
In this subsection, we establish bitopological duality for the category of S4 De Morgan algebras. 
\begin{definition} \label{De Morgan homomorphism}
    Let $A$ and $A'$ be S4 De Morgan algebras. A function $h\colon A\to A'$ is a \emph{homomorphism} provided: 
    \begin{enumerate}
        \item $h$ is a bounded lattice homomorphism; 
        \item $h(-a)=-h(a)$ and $h(\nabla a)=\nabla h(a)$. 
    \end{enumerate}
\end{definition}

\begin{definition}\label{parwise frame homomorphism}
    Let $X= \langle X, g, R, \tau_1, \tau_2 \rangle$ and $X' = \langle X', g', R', \tau'_1, \tau'_2 \rangle$ be PS4D-spaces. \\ A function $f\colon X\to X'$ is a \emph{pairwise frame morphism} provided:
    \begin{enumerate}
        \item $f$ is a bicontinuous function; 
        \item $f(g(x))=g'(f(x))$ for all $x\in X$; 
        \item $f^{-1}[R'[U]]=R[f^{-1}[U]]$ for all $U\in\tau'_1\cap\delta'_2$. 
    \end{enumerate}
\end{definition}
\begin{definition}
    By $\mathbf{S4D}$ we denote the category of S4 De Morgan algebras and homomorphisms. By $\mathbf{PStone_{S4D}}$, we denote the category of pairwise S4 De Morgan Stone spaces and bicontinuous frame morphisms.  
\end{definition}
\begin{lemma}\label{cont to homo}
    Let $X= \langle X, g, R, \tau_1, \tau_2 \rangle$ and $X' = \langle X', g', R', \tau'_1, \tau'_2 \rangle$ be PS4D-spaces and let $f\colon X\to X'$ be a bicontinuous frame morphism. Then $\mathcal{A}_1(f)\colon\mathcal{A}_0(X')\to\mathcal{A}_0(X)$ is a homomorphism of S4 De Morgan algebras under $\mathcal{A}_1(f)=f^{-1}$.  
\end{lemma}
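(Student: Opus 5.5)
We will show, in turn, that $f^{-1}$ maps $\tau'_1\cap\delta'_2$ into $\tau_1\cap\delta_2$ and preserves each operation of $\mathcal{A}_0(X')$.

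\textbf{Well-definedness.} Let $U\in\tau'_1\cap\delta'_2$. Bicontinuity of $f$ gives $f^{-1}[U]\in\tau_1$, since $U\in\tau'_1$. Since $X'\setminus U\in\tau'_2$, continuity with respect to the second topologies gives $X\setminus f^{-1}[U]=f^{-1}[X'\setminus U]\in\tau_2$, so $f^{-1}[U]\in\delta_2$. Hence $\mathcal{A}_1(f)(U)\in\tau_1\cap\delta_2$.

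\textbf{Lattice operations.} Preimages commute with arbitrary unions, intersections and complements. In particular $f^{-1}[U\cap V]=f^{-1}[U]\cap f^{-1}[V]$ and $f^{-1}[U\cup V]=f^{-1}[U]\cup f^{-1}[V]$. Also $f^{-1}[\emptyset]=\emptyset$ and $f^{-1}[X']=X$. So $\mathcal{A}_1(f)$ is a bounded lattice homomorphism.

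\textbf{Involution.} Recall from Lemma \ref{space to algebra} that $U^*=X'\setminus g'^{-1}[U]$. Then
\[f^{-1}[U^*]=X\setminus f^{-1}[g'^{-1}[U]]=X\setminus (g'\circ f)^{-1}[U].\]
Condition 2 of Definition \ref{parwise frame homomorphism} gives $g'\circ f=f\circ g$, so this set equals
\[X\setminus g^{-1}[f^{-1}[U]]=(f^{-1}[U])^*.\]

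\textbf{Closure operator.} This is exactly condition 3 of Definition \ref{parwise frame homomorphism}:
\[f^{-1}[\nabla_{R'}U]=f^{-1}[R'[U]]=R[f^{-1}[U]]=\nabla_R f^{-1}[U].\]

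No step is a real obstacle, since each preservation property is built directly into the definition of a pairwise frame morphism. The only point needing care is the well-definedness step, where one must use the second-topology continuity to get membership in $\delta_2$ rather than only the $\tau_1$-continuity.
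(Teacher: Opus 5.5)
Your proof is correct and takes the same route as the paper. Preimages preserve the lattice operations and bounds, the condition $f\circ g=g'\circ f$ handles $^*$, and the condition $f^{-1}[R'[U]]=R[f^{-1}[U]]$ handles $\nabla_R$. You also check, using both halves of bicontinuity, that $f^{-1}$ sends $\tau'_1\cap\delta'_2$ into $\tau_1\cap\delta_2$; the paper's proof leaves this well-definedness step implicit.
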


\begin{proof}
Standard set-theoretic arguments show  \[\mathcal{A}_1(f)[U \cap V]=\mathcal{A}_1(f)[U] \cap \mathcal{A}_1(f)[V],\hspace{.2cm}\mathcal{A}_1(f)[U \cup V]=\mathcal{A}_1(f)[U] \cup \mathcal{A}_1(f)[V].\] Next, we have 
$$\mathcal{A}_1(f)[U^*]=f^{-1}[U^*]=\{x \in X : f(x) \in U^*\}=\{x \in X : g'(f(x)) \notin U\}.$$
By Definition \ref{parwise frame homomorphism}(2) this is equal to
$$\{x \in X : f(g(x)) \notin U\}=\{x \in X : g(x) \notin f^{-1}[U]\}=(f^{-1}[U])^*=(\mathcal{A}_1(f)[U])^*.$$
Clearly, we have $\mathcal{A}_1(X')=X$ and $\mathcal{A}_1(\emptyset)=\emptyset$. Lastly, using Definition \ref{parwise frame homomorphism}(3) we have 
$$\mathcal{A}_1(f)[\nabla'_R U]=f^{-1}[R'[U]]=R[f^{-1}[U]]=\nabla_R \mathcal{A}_1(f)[U].$$  Therefore, we conclude that $\mathcal{A}_1(f)$ is a homomorphism of S4 De Morgan algebras. 
\end{proof}

\begin{lemma}\label{homo to cont}
    Let $A$ and $B$ be S4 De Morgan algebras and let $h:A \to B$ be a homomorphism. Then $\mathcal{S}_1(h):\mathcal{S}_0(B) \to \mathcal{S}_0(A)$ defined by $\mathcal{S}_1(h)=h^{-1}$ is a bicontinuous frame morphism.
\end{lemma}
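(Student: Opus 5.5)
We need to check the four requirements in turn: that $h^{-1}$ sends prime filters to prime filters, bicontinuity, commutation with the involutions, and condition 3 of Definition \ref{parwise frame homomorphism}. The last is the real work.

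\textbf{Well-definedness.} For $y\in\mathfrak{P}(B)$, the preimage $h^{-1}[y]$ is a filter of $A$ because $h$ preserves $\wedge$, $1$ and order. It is proper since $h(0)=0\notin y$. It is prime since $h(a\vee b)=h(a)\vee h(b)\in y$ forces $h(a)\in y$ or $h(b)\in y$.

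\textbf{Bicontinuity.} It suffices to compute preimages of basic opens. We have
\[
\mathcal{S}_1(h)^{-1}[\phi_+(a)]=\{y:a\in h^{-1}[y]\}=\{y:h(a)\in y\}=\phi_+(h(a)),
\]
and similarly $\mathcal{S}_1(h)^{-1}[\phi_-(a)]=\phi_-(h(a))$. These are basic open sets in $\tau_+$ and $\tau_-$ respectively.

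\textbf{Involution.} Using $h(-a)=-h(a)$,
\[
h^{-1}[g_B(y)]=\{a:-h(a)\notin y\}=\{a:h(-a)\notin y\}=\{a:-a\notin h^{-1}[y]\}=g_A(h^{-1}[y]).
\]

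\textbf{Relational condition (main step).} Every $U\in\tau_+\cap\delta_-$ of $\mathcal{S}_0(A)$ has the form $\phi_+(a)$, by the proof of Theorem \ref{rep thm} (surjectivity of $\phi_+$). So we must show
\[
\mathcal{S}_1(h)^{-1}[R_A[\phi_+(a)]]=R_B[\mathcal{S}_1(h)^{-1}[\phi_+(a)]].
\]
By the identity $R[\phi_+(c)]=\phi_+(\nabla c)$ from the proof of Lemma \ref{algebra to space}, the left side is
\[
\mathcal{S}_1(h)^{-1}[\phi_+(\nabla a)]=\phi_+(h(\nabla a))=\phi_+(\nabla h(a)).
\]
The right side is $R_B[\phi_+(h(a))]=\phi_+(\nabla h(a))$, so the two sides agree.

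The only place requiring care is the reduction to the sets $\phi_+(a)$. This rests on $\tau_+\cap\delta_-=\beta_+$, which uses compactness via Proposition \ref{p:compact}: a $(\tau_+,\delta_-)$-biclopen set is compact in $\tau_+$, so it is a finite union of basic sets $\phi_+(a_i)$, and that union equals $\phi_+(\bigvee a_i)$. Alternatively, one could avoid this reduction and argue directly with $R$ via the Prime Filter Theorem. Once $U=\phi_+(a)$ is established, all the identities above are direct computations.
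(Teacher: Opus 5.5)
Your proof is correct and takes the same route as the paper: preimages of the basic sets $\phi_\pm(a)$, the involution check via $h(-a)=-h(a)$, and the relational condition on the sets $\phi_+(a)$ via $R[\phi_+(c)]=\phi_+(\nabla c)$ and $h(\nabla a)=\nabla h(a)$. It is in fact more complete than the paper's, since you verify that $h^{-1}[y]$ is prime and justify $\tau_+\cap\delta_-=\beta_+$ by pairwise compactness, while rightly dropping the paper's unneeded $\phi_-$ clause; one caveat is that the inclusion $\phi_+(\nabla a)\subseteq R_A[\phi_+(a)]$ you cite is true, but the argument in the proof of Lemma \ref{algebra to space} tacitly assumes $\nabla a\in x$ implies $a\in x$, and a correct proof applies the Prime Filter Theorem to ${\uparrow}a$ and the ideal $\{b:\nabla b\notin x\}$ to get a prime $y\ni a$ with $yR_Ax$.
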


\begin{proof}
We start by showing that $\mathcal{S}_1(h)$ is bicontinuous. It is enough to verify this on the basis $\{\varphi_+(a):a\in A\}$ and $\{\varphi_-(a):a\in A\}$.
We have the following.
\begin{align*}
    \mathcal{S}_1(h)^{-1}[\varphi_+(a)]&=(h^{-1})^{-1}[\varphi_+(a)]=\{x \in \mathcal{S}_0(B) : h^{-1}[x] \in \varphi_+(a)\}\\&=\{x \in \mathcal{S}_0(B) : a \in h^{-1}[x]\}=\{x \in \mathcal{S}_0(B) : h(a) \in x\}=\varphi_+(h(a)).
\end{align*}
Similarly, we have
\begin{align*}
    \mathcal{S}_1(h)^{-1}[\varphi_-(a)]&=(h^{-1})^{-1}[\varphi_-(a)]=\{x \in \mathcal{S}_0(B) : h^{-1}[x] \in \varphi_-(a)\}\\&=\{x \in \mathcal{S}_0(B) : a \notin h^{-1}[x]\}=\{x \in \mathcal{S}_0(B) : h(a) \notin x\}=\varphi_-(h(a)).
\end{align*}
It remains to check the conditions (2) and (3) from Definition  \ref{parwise frame homomorphism}.
To show $\mathcal{S}_1(h)(g_B(x))=g_A(\mathcal{S}_1(h)(x))$ we have
$$\mathcal{S}_1(h)(g_B(x))=\mathcal{S}_1(h)[\{b \in B : -b \notin x\}]=h^{-1}[\{b \in B : -b \notin x\}]=\{a \in A : -h(a) \notin x\}.$$ Using Definition \ref{De Morgan homomorphism} (2) this is equal to 
$$\{a \in A : h(-a) \notin x\}=\{a \in A : -a \notin h^{-1}[x]\}=g_A(h^{-1}[x])=g_A(\mathcal{S}_{1}(h)(x)).$$

It remains to show $\mathcal{S}_1(h)^{-1}[R[U]]=R[\mathcal{S}_1(h)^{-1}[U]]$ for all $U \in \tau'_1 \cap \delta'_2$. Recall that $\mathcal{S}_1(h)^{-1}[\varphi_+(a)]=\varphi_+(h(a))$ and $\mathcal{S}_1(h)^{-1}[\varphi_-(a)]=\varphi_-(h(a))$. Additionally, we have $\nabla_R\varphi_+(a)=\varphi_+(\nabla a)$ and $\nabla_R\varphi_-(a)=\varphi_-(\nabla a)$ by Theorem \ref{rep thm}. Hence:  
 $$\mathcal{S}_1(h)^{-1}[R[\varphi_+(a)]]=\mathcal{S}_1(h)^{-1}[\nabla_R \varphi_+(a)]=\mathcal{S}_1(h)^{-1}[\varphi_+(\nabla (a))]=\varphi_+(h (\nabla a)).$$
By Definition \ref{De Morgan homomorphism} (2) this equals
$$\varphi_+ (\nabla (h(a)))=\nabla_R \varphi_+(h(a))=\nabla_R(h^{-1})^{-1}[\varphi_+(a)]=\nabla_R \mathcal{S}_1(h)^{-1}[\varphi_+(a)].$$
Analogously, it holds that $\mathcal{S}_1(h)^{-1}[\nabla_R \varphi_-(a)]=\nabla_R \mathcal{S}_1(h)^{-1}[\varphi_-(a)]$.  
\end{proof}

\begin{lemma} The operations
$\mathcal{A}_F=\langle\mathcal{A}_0,\mathcal{A}_1\rangle\colon\mathbf{PStone_{S4D}}\to\mathbf{S4D}$ and $\mathcal{S}_F=\langle\mathcal{S}_0,\mathcal{S}_1\rangle\colon\mathbf{S4D}\to\mathbf{PStone_{S4D}}$ provide fully faithful contravariant functors.  
\end{lemma}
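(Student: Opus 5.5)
We must show that $\mathcal{A}_F$ and $\mathcal{S}_F$ are contravariant functors, and that each is full and faithful. The plan is to check functoriality directly and then reduce fullness and faithfulness to the natural isomorphisms $\phi_+$ of Theorem \ref{rep thm} and $\psi$ of Theorems \ref{algebraic realization} and \ref{relational isomorphism}.

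\emph{Functoriality.} Lemmas \ref{cont to homo} and \ref{homo to cont} say that $\mathcal{A}_1$ and $\mathcal{S}_1$ send morphisms to morphisms of the right type. What remains is identities and composition. For identities, $\mathcal{A}_1(\mathrm{id}_X)=\mathrm{id}_X^{-1}$ is the identity on $\tau_1\cap\delta_2$, and $\mathcal{S}_1(\mathrm{id}_A)=\mathrm{id}_A^{-1}$ is the identity on $\mathfrak{P}(A)$. For composition, $(f'\circ f)^{-1}=f^{-1}\circ f'^{-1}$ gives $\mathcal{A}_1(f'\circ f)=\mathcal{A}_1(f)\circ\mathcal{A}_1(f')$. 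Likewise, for $h\colon A\to B$ and $k\colon B\to C$ we have $(k\circ h)^{-1}[x]=h^{-1}[k^{-1}[x]]$ for every prime filter $x$ of $C$, so $\mathcal{S}_1(k\circ h)=\mathcal{S}_1(h)\circ\mathcal{S}_1(k)$.

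\emph{Faithfulness.} Suppose $h,k\colon A\to B$ satisfy $h^{-1}[x]=k^{-1}[x]$ for all $x\in\mathfrak{P}(B)$, and suppose $h(a)\neq k(a)$, say $h(a)\not\leq k(a)$. By the Prime Filter Theorem (Theorem \ref{prime filter theorem}) there is a prime filter $x$ with $h(a)\in x$ and $k(a)\notin x$. Then $a\in h^{-1}[x]\setminus k^{-1}[x]$, a contradiction. For $\mathcal{A}_F$, suppose $f^{-1}[U]=f'^{-1}[U]$ for all $U\in\tau'_1\cap\delta'_2$ but $f(x)\neq f'(x)$. Lemma \ref{l:injectivity} gives $U\in\beta'_1$ containing $f(x)$ but not $f'(x)$, which contradicts the assumption.

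\emph{Fullness.} Let $f\colon\mathcal{S}_0(B)\to\mathcal{S}_0(A)$ be a bicontinuous frame morphism. By Lemma \ref{cont to homo}, $f^{-1}$ is a homomorphism $\mathcal{A}_0\mathcal{S}_0(A)\to\mathcal{A}_0\mathcal{S}_0(B)$. Set $h:=\phi_+^{-1}\circ f^{-1}\circ\phi_+\colon A\to B$, which is a homomorphism by Theorem \ref{rep thm}. Then $h^{-1}[x]=\{a: \phi_+(h(a))\ni x\}=\{a: f(x)\in\phi_+(a)\}=f(x)$, so $\mathcal{S}_1(h)=f$. For $\mathcal{A}_F$, let $h\colon\mathcal{A}_0(X')\to\mathcal{A}_0(X)$ be a homomorphism. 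Set $f:=\psi'^{-1}\circ h^{-1}\circ\psi$, where $h^{-1}=\mathcal{S}_1(h)$ is a bicontinuous frame morphism by Lemma \ref{homo to cont}. The map $\psi$ is a bihomeomorphism commuting with $g$ (Theorems \ref{algebraic realization} and \ref{relational isomorphism}), and it preserves and reflects $R$. From this, $f$ is a bicontinuous frame morphism; condition (3) transfers because $\psi$ and $\psi'$ carry $R$ and the sets $R[U]$ across. Finally, $f^{-1}[U]=\{x: U\in h^{-1}[\psi(x)]\}=\{x: x\in h(U)\}=h(U)$, so $\mathcal{A}_1(f)=h$.

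The main obstacle is this last step: verifying Definition \ref{parwise frame homomorphism}(3) for $f$ requires that $\psi$ transports the images $R[U]$ correctly, i.e. $\psi[R[U]]=R_{\mathcal{A}_0(X)}[\psi[U]]$. Theorem \ref{relational isomorphism}(1) together with bijectivity of $\psi$ gives this immediately. Note also that Lemma \ref{homo to cont} only checked condition (3) on basic sets. The cleaner way to close that gap is to observe that in $\mathcal{S}_0(A)$ every set in $\tau_+\cap\delta_-$ is some $\phi_+(a)$ (the proof of Lemma \ref{algebra to space}), so the basic case suffices.
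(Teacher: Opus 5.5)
Your proposal is correct and follows the paper's route. Functoriality comes from Lemmas~\ref{cont to homo} and~\ref{homo to cont}, faithfulness from the Prime Filter Theorem and Lemma~\ref{l:injectivity}, and fullness from the isomorphisms $\phi_+$ and $\psi$ of Theorems~\ref{rep thm}, \ref{algebraic realization} and~\ref{relational isomorphism}. The paper only asserts fullness, whereas you construct the preimages explicitly, $h=\phi_+^{-1}\circ f^{-1}\circ\phi_+$ and $f=(\psi')^{-1}\circ\mathcal{S}_1(h)\circ\psi$, and check $\mathcal{S}_1(h)=f$ and $\mathcal{A}_1(f)=h$. Your remark that condition (3) need only be checked on the sets $\phi_+(a)$, because these exhaust $\tau_+\cap\delta_-$, correctly tightens Lemma~\ref{homo to cont}.
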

\begin{proof}
    By Lemma \ref{homo to cont} and Lemma \ref{cont to homo} together with Lemma \ref{space to algebra} and Lemma \ref{algebra to space}, it follows that $\langle\mathcal{A}_0,\mathcal{A}_1\rangle\colon\mathbf{PStone_{S4D}}\to\mathbf{S4D}$ and $\langle\mathcal{S}_0,\mathcal{S}_1\rangle\colon\mathbf{S4D}\to\mathbf{PStone_{S4D}}$ are contravariant functors. Hence it suffices to show that if $A$ and $A'$ are S4 De Morgan algebras and $X = \langle X, g, R, \tau_1, \tau_2 \rangle$ and $X' = \langle X', g', R', \tau'_1, \tau'_2 \rangle$ are PS4D-spaces, then: 
    \[\Hom_{\mathbf{S4D}}(A,A')\to\Hom_{\mathbf{PStone_{S4D}}}(\mathcal{S}_0(A'),\mathcal{S}_0(A))\]
     \[\Hom_{\mathbf{PStone_{S4D}}}(X,X')\to\Hom_{\mathbf{S4D}}(\mathcal{A}_0(X'),\mathcal{A}_0(X))\] are fully faithful. For the former, we start by proving the injectivity on morphisms.

     Let $h_1,h_2 \in \Hom_{\mathbf{S4D}}(A,A')$. Assume $h_1 \neq h_2$. Then there is $a \in A$ such that $h_1(a) \neq h_2(a)$. Without loss of generality, let us assume $h_1(a) \nleq h_2(a)$.
     Consider: 
     \[{\uparrow} h_1(a)=\{b \in A' : h_1(a) \leq b\},\hspace{.2cm}{\downarrow} h_2(a)=\{b \in A' :  b \leq h_2(a)\}.\]
     Since $h_1(a) \nleq h_2(a)$, we have ${\uparrow} h_1(a) \cap {\downarrow} h_2(a)= \emptyset$. By Prime Filter Theorem i.e., Theorem \ref{prime filter theorem}, there is a prime filter $x_\rho$ such that ${\uparrow} h_1(a) \subseteq x_\rho$ and ${\downarrow} h_2(a) \cap x_\rho = \emptyset$. Since $h_1(a) \in {\uparrow} h_1(a)$ it follows $h_1(a) \in x_\rho$ and $h_2(a) \notin x_\rho$.
     In other words we have $a \in h_1^{-1}[x_\rho] = \mathcal{S}_1(h_1)(x_\rho)$ and $a \notin h_2^{-1}[x_\rho] = \mathcal{S}_1(h_2)(x_\rho)$. Therefore, $\mathcal{S}_1(h_1) \neq \mathcal{S}_1(h_2)$.

     To show the injectivity of the other functor we assume $f_1,f_2\in \Hom_{\mathbf{PStone}_{\mathbf{S4D}}}(X,X')$ are bicontinuous frame morphisms and there is $x \in X$ such that $f_1(x) \neq f_2(x)$. Since $X'$ is pairwise zero-dimensional and pairwise Hausdorff, by Lemma \ref{l:injectivity} it follows there is $U \in \tau'_1 \cap \delta'_2$, such that $f_1(x) \in U$ and $f_2(x) \notin U$. Hence, it follows 
    $x\in f_1^{-1}[U]=\mathcal{A}_1(f_1)(U)$ and $x \notin f_2^{-1}[U]=\mathcal{A}_1(f_2)(U)$.
    This shows $\mathcal{A}_1(f_1) \neq \mathcal{A}_1(f_2)$.

For surjectivity, note that by Theorem \ref{rep thm}, Theorem \ref{algebraic realization}, Lemma \ref{homo to cont}, and Lemma \ref{cont to homo}, for every $f \in \Hom_{\mathbf{PStone_{S4D}}}(\mathcal{S}_0(A'), \mathcal{S}_0(A))$, there exists $h\in\Hom_{\mathbf{S4D}}(A,A')$ such that $\mathcal{S}_1(h)=f$ and  for every $h \in \Hom_{\mathbf{S4D}}(\mathcal{A}_0(X'), \mathcal{A}_0(X))$, there exists $f\in\Hom_{\mathbf{PStone_{S4D}}}(X,X')$ such that $\mathcal{A}_1(f)=h$ and hence $\mathcal{S}_F$ and $\mathcal{A}_F$ are fully faithful.
\end{proof}

\begin{lemma}
    Let $X = \langle X, g, R, \tau_1, \tau_2 \rangle$ and $X' = \langle X', g', R', \tau'_1, \tau'_2 \rangle$ be PS4D-spaces and let $f\colon X\to X'$ be a bicontinuous frame morphism, then $\mathcal{S}_1(\mathcal{A}_1(f))[\psi(x)]=\psi(f(x))$ for all $x\in X$. 
\end{lemma}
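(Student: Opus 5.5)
The claim is a naturality square, and it should follow by unwinding the definitions and comparing the two prime filters of $\mathcal{A}_0(X')$ element by element. I would not use any topology beyond the fact that $\mathcal{A}_1(f)=f^{-1}$ maps $\tau'_1\cap\delta'_2$ into $\tau_1\cap\delta_2$; that fact is guaranteed by bicontinuity of $f$ and is already used in Lemma \ref{cont to homo}.

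First I fix the types. By Lemma \ref{cont to homo}, $\mathcal{A}_1(f)\colon\mathcal{A}_0(X')\to\mathcal{A}_0(X)$ is a homomorphism of S4 De Morgan algebras. Lemma \ref{homo to cont} then gives that
\[
\mathcal{S}_1(\mathcal{A}_1(f))=(\mathcal{A}_1(f))^{-1}\colon\mathcal{S}_0(\mathcal{A}_0(X))\to\mathcal{S}_0(\mathcal{A}_0(X')).
\]
For $x\in X$, the set $\psi(x)=\{U\in\tau_1\cap\delta_2:x\in U\}$ is a prime filter of $\mathcal{A}_0(X)$, as shown in the proof of Theorem \ref{algebraic realization}. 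So the left-hand side is the preimage
\[
\mathcal{S}_1(\mathcal{A}_1(f))(\psi(x))=\{V\in\tau'_1\cap\delta'_2:\mathcal{A}_1(f)(V)\in\psi(x)\}.
\]
Here I read the paper's bracket notation $[\psi(x)]$ as function application at the point $\psi(x)$. The right-hand side is $\psi(f(x))=\{V\in\tau'_1\cap\delta'_2:f(x)\in V\}$, which is a prime filter of $\mathcal{A}_0(X')$.

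The core step is a chain of equivalences for an arbitrary $V\in\tau'_1\cap\delta'_2$:
\[
V\in\mathcal{S}_1(\mathcal{A}_1(f))(\psi(x))\iff f^{-1}[V]\in\psi(x)\iff x\in f^{-1}[V]\iff f(x)\in V\iff V\in\psi(f(x)).
\]
Since $V$ is arbitrary, the two sets are equal.

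The only point needing care is the second equivalence. It requires $f^{-1}[V]$ to be an element of $\tau_1\cap\delta_2$, so that membership in $\psi(x)$ is simply membership of $x$. I would justify this directly. Since $f$ is bicontinuous and $V\in\tau'_1$, we get $f^{-1}[V]\in\tau_1$. Since $X'\setminus V\in\tau'_2$, we get $X\setminus f^{-1}[V]=f^{-1}[X'\setminus V]\in\tau_2$, so $f^{-1}[V]\in\delta_2$. With that in place the argument is a direct definitional check, and I do not anticipate a genuine obstacle.
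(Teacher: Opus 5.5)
Your proposal is correct and follows the paper's proof. Both unwind $\mathcal{S}_1(\mathcal{A}_1(f))(\psi(x))=\{U\in\tau'_1\cap\delta'_2 : f^{-1}[U]\in\psi(x)\}=\{U : f(x)\in U\}=\psi(f(x))$. Your explicit check that $f^{-1}[V]\in\tau_1\cap\delta_2$ by bicontinuity justifies the middle equality, which the paper leaves implicit.
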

    \begin{proof}
    A simple calculation from the definition of $\mathcal{S}_1$ and $\mathcal{A}_1$ yields: 
    \[\mathcal{S}_1(\mathcal{A}_1(f))[\psi(x)]=(f^{-1})^{-1}[\psi(x)]=\{U\in\tau'_1\cap\delta'_2:f^{-1}[U]\in\psi(x)\}\] Unraveling the definition of preimages and the definition of $\psi$ then gives: 
    \[\{U\in\tau'_1\cap\delta'_2:f^{-1}[U]\in\psi(x)\}=\{U\in\tau'_1\cap\delta'_2:f(x)\in U\}=\psi(f(x))\] which completes the proof. 
\end{proof}

\begin{theorem}\label{main theorem}
    $\mathbf{S4D}$ is dually equivalent to $\mathbf{PStone_{S4D}}$. 
\end{theorem}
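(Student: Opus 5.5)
To prove Theorem \ref{main theorem}, I will exhibit natural isomorphisms $\phi\colon 1_{\mathbf{S4D}}\Rightarrow\mathcal{A}_F\circ\mathcal{S}_F$ and $\psi\colon 1_{\mathbf{PStone_{S4D}}}\Rightarrow\mathcal{S}_F\circ\mathcal{A}_F$. The preceding lemmas already show that $\mathcal{A}_F$ and $\mathcal{S}_F$ are well-defined contravariant functors. What is left is that their composites are naturally isomorphic to the identities. The components are the maps already built: $\phi_A:=\phi_+\colon A\to\mathcal{A}_0(\mathcal{S}_0(A))$ from Theorem \ref{rep thm}, and $\psi_X\colon X\to\mathcal{S}_0(\mathcal{A}_0(X))$ from Theorem \ref{algebraic realization}.

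\emph{Components are isomorphisms.} Theorem \ref{rep thm} shows that each $\phi_A$ is an isomorphism of S4 De Morgan algebras. For $\psi_X$, Theorem \ref{algebraic realization} gives a bihomeomorphism, and Theorem \ref{relational isomorphism} gives that $\psi_X$ commutes with the involutions and preserves and reflects $R$. To be an isomorphism in $\mathbf{PStone_{S4D}}$, both $\psi_X$ and $\psi_X^{-1}$ must be bicontinuous frame morphisms, so I must check condition (3) of Definition \ref{parwise frame homomorphism}.

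\begin{itemize}
\item For $\psi_X^{-1}$: for any relation-isomorphic bijection $f$ one has $f[R[W]]=R'[f[W]]$.
\item For $\psi_X$: take a $(\tau_+,\delta_-)$-biclopen set of $\mathcal{S}_0(\mathcal{A}_0(X))$. By the proof of Lemma \ref{algebra to space} and Theorem \ref{rep thm} it has the form $\phi_+(U)$ with $U\in\tau_1\cap\delta_2$. From the continuity computation, $\psi_X^{-1}[\phi_+(U)]=U$. Also $R_{\mathcal{A}_0(X)}[\phi_+(U)]=\phi_+(\nabla_R U)=\phi_+(R[U])$. Hence
\[\psi_X^{-1}\bigl[R_{\mathcal{A}_0(X)}[\phi_+(U)]\bigr]=R[U]=R\bigl[\psi_X^{-1}[\phi_+(U)]\bigr].\]
\end{itemize}

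\emph{Naturality.} For $\psi$ this is exactly the preceding lemma: $\mathcal{S}_1(\mathcal{A}_1(f))\circ\psi_X=\psi_{X'}\circ f$. For $\phi$, take $h\colon A\to A'$. I need $\mathcal{A}_1(\mathcal{S}_1(h))(\phi_+(a))=\phi_+(h(a))$. This is the computation $(h^{-1})^{-1}[\phi_+(a)]=\phi_+(h(a))$ already carried out in the proof of Lemma \ref{homo to cont}, so $\mathcal{A}_1(\mathcal{S}_1(h))\circ\phi_A=\phi_{A'}\circ h$.

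\emph{Conclusion and main obstacle.} Naturality and the iso components together give the dual equivalence. Alternatively, full faithfulness plus essential surjectivity (every algebra is isomorphic to $\mathcal{A}_0(\mathcal{S}_0(A))$) gives the same result. The step I expect to be delicate is verifying that $\psi_X$ is an isomorphism in $\mathbf{PStone_{S4D}}$, specifically condition (3). That check relies on every $(\tau_+,\delta_-)$-biclopen set of the spectrum being of the form $\phi_+(U)$, which I will take from the surjectivity part of Theorem \ref{rep thm}, itself resting on the compactness argument via Proposition \ref{p:compact}.
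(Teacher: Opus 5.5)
Your proposal is correct and follows the same route as the paper: it takes $\phi_+$ and $\psi$ as the components of natural isomorphisms $1_{\mathbf{S4D}}\Rightarrow\mathcal{A}_F\circ\mathcal{S}_F$ and $1_{\mathbf{PStone_{S4D}}}\Rightarrow\mathcal{S}_F\circ\mathcal{A}_F$, with naturality given by the same preimage computations, and your explicit check of condition (3) for $\psi_X$ and $\psi_X^{-1}$ supplies a step the paper leaves implicit. One caution: the identity $R_A[\phi_+(a)]=\phi_+(\nabla a)$ that you cite is true, but the inclusion $\phi_+(\nabla a)\subseteq R_A[\phi_+(a)]$ needs the Prime Filter Theorem, applied to ${\uparrow}a$ and the ideal $\{b:\nabla b\notin x\}$; the paper's argument for it is defective, since $\nabla a\in x$ does not imply $a\in x$.
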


\begin{proof}
    We have shown that there exist natural transformations $\varphi_+:\mathbf{id}_{\textbf{S4D}} \to \mathcal{A}_F \circ \mathcal{S}_F$ and $\psi:\textbf{id}_{\mathbf{PStone_{S4D}}} \to \mathcal{S}_F \circ \mathcal{A}_F$, which are natural isomorphisms making:

\centering
 \begin{tikzcd}
A \arrow[dd, "h"'] \arrow[r] \arrow[r] \arrow[rr, bend left, "\phi_+"] & \mathcal{S}_0(A) \arrow[r]                                                & \mathcal{A}_0\circ\mathcal{S}_0(A) \arrow[dd, "\mathcal{A}_1\circ\mathcal{S}_1(h)" description] \\
{} \arrow[r, "1-1", no head, dashed]                                       & {} \arrow[r, "1-1", no head, dashed]                                                 & {}                                                                                          \\
B \arrow[r] \arrow[rr, bend right, "\phi_+"']                             & \mathcal{S}_0(B) \arrow[uu, "\mathcal{S}_1(h)" description] \arrow[r] & \mathcal{A}_0\circ\mathcal{S}_0(B)                    
\end{tikzcd}
\hspace{6pt}
\begin{tikzcd}
X \arrow[dd, "f"'] \arrow[r] \arrow[rr, bend left, "\psi"] & \mathcal{A}_0(X) \arrow[r]                                                & \mathcal{S}_0\circ\mathcal{A}_0(X) \arrow[dd, "\mathcal{S}_1\circ\mathcal{A}_1(f)" description] \\
{} \arrow[r, "1-1", no head, dashed]                            & {} \arrow[r, "1-1", no head, dashed]                                                & {}                                                                                         \\
Y \arrow[r] \arrow[rr, bend right, "\psi"']                   & \mathcal{A}_0(Y) \arrow[uu, "\mathcal{A}_1(f)" description] \arrow[r] & \mathcal{S}_0\circ\mathcal{A}_0(Y)                                            
\end{tikzcd}

commute. This completes the proof.  
\end{proof}
The obtained bitopological duality for S4 De Morgan algebras clearly results in a bitopological duality for general De Morgan algebras under the following restriction. 
\begin{definition}\label{pairwise de morgan stone space}
    If $\langle X;g,R,\tau_1,\tau_2\rangle$ is a pairwise S4 De Morgan Stone space, we call its $R$-free reduct a \emph{pairwise De Morgan Stone space} whenever: 
    \begin{enumerate}
        \item $\tuple{X;\tau_1,\tau_2}$ is a pairwise Stone space; 
        \item $g\colon X\to X$ is a twist continuous involution. 
    \end{enumerate}
\end{definition}
\begin{theorem}
    $\mathcal{S}_0(A)=\langle\mathfrak{P}(A);g_A;\tau_+(\beta_+),\tau_-(\beta_-)\rangle$ is a pairwise De Morgan Stone space whenever $A$ is a De Morgan algebra. Moreover, $\mathcal{A}_0(X)=\langle\tau_1\cap\delta_2;\cap,\cup,^*,\emptyset,X\rangle$ is a De Morgan algebra whenever $X$ is a pairwise De Morgan Stone space. 
\end{theorem}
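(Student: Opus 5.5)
The plan is to obtain both claims as $\nabla$-free (respectively $R$-free) restrictions of Lemma \ref{algebra to space} and Lemma \ref{space to algebra}. Each step in those proofs that concerns the lattice structure, the involution $g_A$ (or $^*$), or the topology makes no use of $\nabla$, $R$, or the S4 axioms. So the same arguments go through unchanged when $A$ is only a De Morgan algebra and $X$ is only a pairwise De Morgan Stone space.

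For the first claim, let $A$ be a De Morgan algebra and form $\mathcal{S}_0(A)=\langle\mathfrak{P}(A);g_A;\tau_+(\beta_+),\tau_-(\beta_-)\rangle$ as in the definition of the bitopological spectrum, omitting $R_A$. I would then run through condition 1 of Definition \ref{pairwise de morgan stone space} as in the proof of Lemma \ref{algebra to space}:
\begin{enumerate}
\item pairwise Hausdorffness comes from separating $x\not\subseteq y$ by $\phi_+(a)$ and $\phi_-(a)$;
\item pairwise compactness comes from the Prime Filter Theorem (Theorem \ref{prime filter theorem}) applied to the ideal generated by the $a_i$ and the filter generated by the $b_k$;
\item pairwise zero-dimensionality follows from $\phi_+(a)=\mathfrak{P}(A)\setminus\phi_-(a)$ together with Proposition \ref{p:compact}.
\end{enumerate}
All three use only the bounded distributive lattice reduct. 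For condition 2, I repeat the verification that $g_A(x)$ is a prime filter, which uses only the De Morgan identities and $-0=1$. I then recompute $g_A^{-1}[\phi_+(a)]=\phi_-(-a)$ and $g_A^{-1}[\phi_-(a)]=\phi_+(-a)$ to get twist continuity. Finally, $g_Ag_A=\mathrm{id}$ follows from $--a=a$.

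For the second claim, let $X$ be a pairwise De Morgan Stone space. I would show that $\tau_1\cap\delta_2$ is closed under $\cap$, $\cup$, $\emptyset$ and $X$, hence a bounded distributive sublattice of $\wp(X)$. Next I check that $U^*=X\setminus g^{-1}[U]$ stays in $\tau_1\cap\delta_2$: twist continuity gives $g^{-1}[U]\in\tau_2\cap\delta_1$, and taking complements returns to $\tau_1\cap\delta_2$. It then remains to check that $^*$ is order-inverting and involutive, exactly as in Lemma \ref{space to algebra}. By the remark after the definition of De Morgan algebras, an order-inverting involution on a bounded distributive lattice yields the De Morgan identities. If one prefers to verify them directly, $(U\cap V)^*=U^*\cup V^*$ is immediate from the set-theoretic description of $^*$.

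There is no genuine obstacle here. The only point requiring care is checking that no step in the earlier proofs silently used $\nabla$ or $R$. The one place this could happen is the zero-dimensionality argument, where surjectivity onto biclopens is invoked; but that uses only compactness and Proposition \ref{p:compact}, so it is unaffected.
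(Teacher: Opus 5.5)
Your proposal is correct and is essentially the paper's argument. The paper's proof is just ``Immediate by Lemma~\ref{algebra to space} and Lemma~\ref{space to algebra}'', and you have made explicit that the lattice-theoretic, topological and involution parts of those proofs never use $\nabla$, $R$ or the S4 axioms.

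If you would rather invoke the two lemmas as black boxes, which also matches the paper's wording of a pairwise De Morgan Stone space as an $R$-free reduct, do the following. Expand $A$ by $\nabla=\mathrm{id}$, so that $R_A$ becomes inclusion. Expand $X$ by $R={\leq_{\tau_1}}$, which satisfies conditions 3--5 of Definition~\ref{pairwise S4 De Morgan space} because $\tau_1$-open sets are $\leq_{\tau_1}$-upsets and $\tau_1\cap\delta_2$ is a basis of $\tau_1$. Then forget the extra structure.
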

\begin{proof}
    Immediate by Lemma \ref{algebra to space} and Lemma \ref{space to algebra}. 
\end{proof}
\begin{definition}
    If $X = \langle X, g, R, \tau_1, \tau_2 \rangle$ and $X' = \langle X', g', R', \tau'_1, \tau'_2 \rangle$ are pairwise De Morgan spaces, a function $f\colon X\to X'$ is a \emph{bicontinuous frame morphism} if $f$ is a bicontinuous function satisfying $f(g(x))=g'(f(x))$. 
  
\end{definition}
\begin{definition}
    By $\mathbf{DM}$ we denote the category of De Morgan algebras and De Morgan algebra homomorphisms. By $\mathbf{PStone_{DM}}$ we denote the category of pairwise De Morgan Stone spaces and bicontinuous frame morphisms. 
\end{definition}
\begin{theorem}
    $\mathbf{DM}$ is dually equivalent to $\mathbf{PStone_{DM}}$. 
\end{theorem}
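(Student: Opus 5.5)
The plan is to obtain the duality $\mathbf{DM}\simeq^{\mathrm{op}}\mathbf{PStone_{DM}}$ by repeating the construction of Section \ref{sec:duality} with every mention of $\nabla$ and $R$ deleted. The functors are $\mathcal{S}_F=\langle\mathcal{S}_0,\mathcal{S}_1\rangle$ and $\mathcal{A}_F=\langle\mathcal{A}_0,\mathcal{A}_1\rangle$, now restricted to De Morgan algebras and pairwise De Morgan Stone spaces.

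On objects, the preceding theorem already says that $\mathcal{S}_0(A)$ is a pairwise De Morgan Stone space and $\mathcal{A}_0(X)$ is a De Morgan algebra. I would check this is honest rather than quoting Lemmas \ref{algebra to space} and \ref{space to algebra} verbatim. The pairwise Stone argument and the verification that $g_A$ is a well-defined twist continuous involution use only the lattice structure and $-$. Likewise, the closure of $\tau_1\cap\delta_2$ under $\cap,\cup,{}^*$ and the fact that ${}^*$ is an order-inverting involution use only $g$.

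On morphisms, I would reuse the first halves of Lemmas \ref{cont to homo} and \ref{homo to cont}. These show that $f^{-1}$ preserves $\cap,\cup,\emptyset,X$ and ${}^*$ whenever $f$ is bicontinuous and commutes with $g$. They also show that $h^{-1}$ is bicontinuous, because $\mathcal{S}_1(h)^{-1}[\phi_\pm(a)]=\phi_\pm(h(a))$, and that $h^{-1}$ intertwines $g_B$ and $g_A$. Functoriality, namely preservation of identities and composition, is immediate since both functors act by inverse image.

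Next come the unit and counit.
\begin{enumerate}
\item The map $\phi_+\colon A\to\mathcal{A}_0(\mathcal{S}_0(A))$ is an isomorphism by the proof of Theorem \ref{rep thm} with the $\nabla$ clause omitted. Injectivity comes from the Prime Filter Theorem; preservation of $\wedge,\vee,0,1$ and the identity $\phi_+(-a)=\phi_+(a)^*$ are computed as there.
\item For surjectivity of $\phi_+$, I must know that every $(\tau_+,\delta_-)$-biclopen set has the form $\phi_+(a)$. I would argue directly. Such a $U$ is $\tau_+$-open, so it is a union of sets $\phi_+(a_i)$. It is also $\delta_-$-closed, hence $\tau_+$-compact by Proposition \ref{p:compact}. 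So finitely many $\phi_+(a_i)$ suffice, and $U=\phi_+(a_1\vee\dots\vee a_n)$.
\item The map $\psi\colon X\to\mathcal{S}_0(\mathcal{A}_0(X))$ is a bihomeomorphism by Theorem \ref{algebraic realization}, whose proof never uses $R$. It commutes with the involutions by Theorem \ref{relational isomorphism}(2).
\end{enumerate}

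Naturality of $\psi$ is the computation $\mathcal{S}_1(\mathcal{A}_1(f))(\psi(x))=\psi(f(x))$ from the lemma preceding Theorem \ref{main theorem}, which again makes no use of $R$. Naturality of $\phi_+$ is the analogous one-line check $(h^{-1})^{-1}[\phi_+(a)]=\phi_+(h(a))$, already carried out in Lemma \ref{homo to cont}. Having natural isomorphisms $\mathbf{id}\cong\mathcal{A}_F\circ\mathcal{S}_F$ and $\mathbf{id}\cong\mathcal{S}_F\circ\mathcal{A}_F$ gives the dual equivalence.

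The main obstacle is the surjectivity step in item 2. It is the only place where the earlier text is sketchy, and it requires combining pairwise compactness (Proposition \ref{p:compact}) with closure of the basis under finite joins. A lesser subtlety concerns the surjectivity argument for $\psi$ via $F\cup G$, which I would keep as in Theorem \ref{algebraic realization}. There I would only confirm that the intersection is a singleton, using pairwise Hausdorffness together with Lemma \ref{l:injectivity}.
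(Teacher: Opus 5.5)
Your proof is correct, but it is organized differently from the paper, whose entire argument is ``Immediate by Theorem~\ref{main theorem}.''

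Read literally, that deduction requires placing $\mathbf{DM}$ inside $\mathbf{S4D}$ and identifying the matching subcategory of $\mathbf{PStone_{S4D}}$. The natural embedding is $A\mapsto\langle A;\mathrm{id}\rangle$. Its dual relation $R_A$ is inclusion of prime filters, i.e.\ the specialization order $\leq_{\tau_+}$. It is not the diagonal, which violates condition~5 of Definition~\ref{pairwise S4 De Morgan space} unless $\tau_1$ is $T_1$. One then has to check three things:
\begin{enumerate}
\item a pairwise De Morgan Stone space equipped with $R={\leq_{\tau_1}}$ is a PS4D-space with $\nabla_R=\mathrm{id}$;
\item conversely, $\nabla_R=\mathrm{id}$ forces $R={\leq_{\tau_1}}$, using condition~5;
\item condition~3 of Definition~\ref{parwise frame homomorphism} is automatic for such spaces, because $\tau_1$-open sets are $\leq_{\tau_1}$-upsets.
\end{enumerate}
The S4D duality then restricts to these full subcategories.

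Your route avoids this embedding by auditing each lemma for uses of $\nabla$ and $R$. That makes the argument self-contained, and it also supplies the compactness argument for surjectivity of $\phi_+$, which the paper only gestures at. The paper's route is shorter and inherits any repair of the S4D proofs for free, but only once the embedding is made explicit.

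One caveat: surjectivity of $\phi_+$ is not the only weak point you inherit. Theorem~\ref{algebraic realization} obtains bicontinuity of $\psi^{-1}$ from a bitopological analogue of Lemma~\ref{topology lemma}, and that analogue fails in general. A counterexample: take a Priestley space $\langle X;\leq,\tau\rangle$ with nontrivial order, and the identity map from $\langle X;\tau,\tau\rangle$ to the pairwise Stone space of its open upsets and open downsets. This is a bicontinuous bijection between pairwise Stone spaces that is not a bihomeomorphism.

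The fix is direct. Since $\psi$ is a bijection with $\psi^{-1}[\phi_+(U)]=U$ and $\psi^{-1}[\phi_-(U)]=X\setminus U$, we get $\psi[U]=\phi_+(U)$ and $\psi[X\setminus U]=\phi_-(U)$ for every $U\in\tau_1\cap\delta_2$. Hence $\psi$ carries the bases $\beta_1$ and $\beta_2$ onto the bases of $\tau_+$ and $\tau_-$, so it is open in both topologies.
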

\begin{proof}
    Immediate by Theorem \ref{main theorem}.  
\end{proof}

\subsection{$\mathbf{Priest_{DM}}$ vs. $\mathbf{PStone_{DM}}$ and $\mathbf{Spec_{DM}}$} It is worth mentioning a subtle distinction between Priestley duality for De Morgan algebras and both pairwise Stone duality and spectral duality for De Morgan algebras. Recall from \cite{cornish1} that a \emph{De Morgan Priestley space} is a topological space $\langle X;\leq,g,\tau\rangle$ such that $\langle X;\leq,\tau\rangle$ is a compact partially ordered topological space such that if $x\not\leq y$, there exists a clopen upset $U$ of $X$ such that $x\in U$ and $y\not\in U$. Moreover, $g\colon X\to X$ is a continuous order-inverting involution. It follows that the prime filter spectrum $\mathcal{S}_0(A)=\langle\mathfrak{P}(A);\subseteq,g_A,\tau(\sigma)\rangle$ with sub-basis 
$$\sigma = \{\phi(a):a\in A\}\cup \{\mathfrak{P}(A)\setminus\phi(a) : a \in A\}$$
is a De Morgan Priestley space, the algebra of clopen upsets $$\mathcal{A}_0(X)=\langle\delta\cap\tau\cap\mu;\cap,\cup,^*,\emptyset,X\rangle$$ is a De Morgan algebra, where $\mu$ is the collection of upsets of $X$, and that every De Morgan algebra $A$ is isomorphic to $\mathcal{A}_0(\mathcal{S}_0(A))$ under $\phi(a)=\{x\in\mathfrak{P}(A):a\in x\}$.

In \cite{mcdonald}, a \emph{De Morgan spectral space} is defined as a topological space $\langle X;g,\tau\rangle$ such that $\langle X;\tau\rangle$ is a spectral space (i.e., $X$ is compact, coherent, sober, and $T_0$), $g\colon X\to X$ is an order-inverting involution, and $U^*$ is compact open whenever $U$ is compact open. Then the prime filter spectrum $\mathcal{S}_0(A)=\langle\mathfrak{P}(A);g_A,\tau(\beta)\rangle$ with basis $\beta=\{\phi(a) : a \in A\}$ is a De Morgan spectral space, the algebra of compact open sets $\mathcal{A}_0=\langle\sigma\cap\tau;\cap,\cup,^*,\emptyset,X\rangle$ is a De Morgan algebra, and every De Morgan algebra $A$ is isomorphic to $\mathcal{A}_0(\mathcal{S}_0(A))$ under $\phi$. Hence, an important distinction between these dualities, aside from the distinction in the Priestley, pairwise Stone, and spectral topologies underlying the respective dual spaces, is the requirement in Priestley duality that $g$ be continuous, which implies that $U^*$ is a clopen upset whenever $U$ is a clopen upset. The analogs for pairwise Stone spaces under bicontinuous functions as well as spectral spaces under spectral maps are however insufficient to guarantee that $U^*$ belongs to the corresponding algebra of sets. This is precisely described by the following.           

\begin{proposition}
    Let $\langle X;\leq,\tau\rangle$ be a Priestley space, let $\langle X;\tau_1,\tau_2\rangle$ be a pairwise Stone space, let $\langle X,\tau\rangle$ be a spectral space, and let $U^*=\{x\in X:g(x)\not\in U\}$. Then: 
    \begin{enumerate}
        \item if $U$ is a clopen upset in $\langle X;\leq,\tau\rangle$ and $g\colon X\to X$ is a continuous decreasing map, then $U^*$ is a clopen upset; 
        \item if $U$ is $(\tau_1,\delta_2)$-biclopen in $\langle X,\tau_1,\tau_2\rangle$ and $g\colon X\to X$ is a bicontinuous map, then $U^*$ is not necessarily $(\tau_1,\delta_2)$-biclopen; 
        \item if $U$ is compact open in $\langle X;g,\tau\rangle$ and $g\colon X\to X$ is a spectral map, then $U^*$ is not necessarily compact open. 
    \end{enumerate}
\end{proposition}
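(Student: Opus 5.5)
Part (1) is a direct verification and parts (2) and (3) need counterexamples; for (2) and (3) I will use one small finite example.

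For (1), I write $U^*=X\setminus g^{-1}[U]$. If $U$ is clopen and $g$ is continuous, then $g^{-1}[U]$ is clopen, and hence so is its complement. For the upset property, let $x\in U^*$ and $x\leq y$. Since $g$ is decreasing, $g(y)\leq g(x)$. If $g(y)\in U$, then $g(x)\in U$ because $U$ is an upset, which contradicts $x\in U^*$. So $g(y)\notin U$, that is, $y\in U^*$.

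For (2) and (3), a finite counterexample suffices. I take the two-element chain $X=\{a,b\}$ with $a<b$. As a spectral space I give it the Sierpi\'nski topology $\tau=\{\emptyset,\{b\},X\}$, whose open sets are the upsets. As a pairwise Stone space I take $\tau_1=\{\emptyset,\{b\},X\}$ (upsets) and $\tau_2=\{\emptyset,\{a\},X\}$ (downsets). This is the bitopological dual of the three-element chain, so it is pairwise Stone by Lemma \ref{algebra to space} applied to the spectrum, or by direct inspection. For $g$ I take the identity. The identity is bicontinuous and spectral, and it is an involution.

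Now let $U=\{b\}$. It is $(\tau_1,\delta_2)$-biclopen, since $\{b\}\in\tau_1$ and $X\setminus\{b\}=\{a\}\in\tau_2$. It is also compact open in the spectral space. However $U^*=\{x:x\notin U\}=\{a\}$, which is not in $\tau_1$, and not open in $\tau$. This establishes (2) and (3) at once.

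The main point to check is that the chosen spaces genuinely satisfy the hypotheses. Pairwise compactness and the spectral axioms are trivial for a finite space. Pairwise zero-dimensionality holds: $\beta_1=\tau_1\cap\delta_2$ contains $\{b\}$ and $X$, and $\beta_2$ contains $\{a\}$ and $X$. Pairwise Hausdorffness holds via the sets $\{b\}$ and $\{a\}$. The underlying reason for the failure is the one the paper identifies: the identity is order-preserving rather than twist continuous or order-inverting. This is exactly why Definition \ref{pairwise S4 De Morgan space} requires $g$ to be twist continuous.
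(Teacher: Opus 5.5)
Your proof is correct. Part (1) is the same verification the paper gives; the paper writes $U^*=g^{-1}[X\setminus U]$ and uses that $g$ is decreasing and $U$ is an upset.

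For (2) and (3) you take a genuinely different route. The paper gives no counterexample. For (2) it observes that bicontinuity sends $X\setminus U\in\tau_2\cap\delta_1$ to $g^{-1}[X\setminus U]=U^*\in\delta_1\cap\tau_2$, and then concludes from $\delta_1\cap\tau_2\neq\tau_1\cap\delta_2$. For (3) it simply asserts that $g^{-1}[X\setminus U]$ need not be compact open. Knowing $U^*\in\beta_2$ does not by itself produce an instance with $U^*\notin\beta_1$, so ``not necessarily'' is not actually established there. Your two-point example ($g=\mathrm{id}$, $U=\{b\}$, $U^*=\{a\}$) settles (2) and (3) rigorously and at once. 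What the paper's observation adds is the structural reason: $U^*$ always lands in $\beta_2=\tau_2\cap\delta_1$, as your $\{a\}$ does. That is exactly why replacing bicontinuity by twist continuity repairs the construction.

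One point you should make explicit concerns pairwise Hausdorffness. Your check separates only the ordered pair $(b,a)$, with $b\in\{b\}\in\tau_1$ and $a\in\{a\}\in\tau_2$. Read literally, the paper's Definition \ref{pairwise stone space}(3) requires, for every ordered pair $(x,y)$, some $U\in\tau_1$ around $x$ and a disjoint $V\in\tau_2$ around $y$. For $(a,b)$ the only $\tau_1$-open set containing $a$ is $X$, so the literal condition fails. You are correctly using the standard disjunctive definition from \cite{bezhanishvili}: either $x\in U,\ y\in V$ or $x\in V,\ y\in U$. The paper itself relies on that version, in its ``without loss of generality $x\not\subseteq y$'' step in Lemma \ref{algebra to space} and in its appeal to the $T_0$ characterization. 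Under the literal reading, the spectrum of any lattice with two comparable prime filters, your three-element chain included, would not be pairwise Hausdorff. State this convention and the example is airtight.
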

\begin{proof}
    For part 1, suppose $U$ is a clopen upset in $\langle X;\leq,\tau\rangle$, then we have 
    \[U^*=\{x\in X:g(x)\not\in U\}=X\setminus g^{-1}[U]=g^{-1}[X\setminus U].\] Clearly $g^{-1}[X\setminus U]$ is clopen in $\langle X;\leq,g,\tau\rangle$ since $g$ is continuous. Now assume that $x\in g^{-1}[X\setminus U]$ so that $g(x)\in X\setminus U$ and hence $g(x)\not\in U$. Take any $y\in X$ such that $x\leq y$. Since $g$ is decreasing, we have $g(y)\leq g(x)$ and hence $g(y)\not\in U$ so $g(y)\in X\setminus U$ and therefore $y\in g^{-1}[X\setminus U]$. Thus $g^{-1}[X\setminus U]$ is also an upset and hence $U^*$ a clopen upset. For condition 2 observe that if $U$ is $(\tau_1,\delta_2)$-biclopen in $\langle X;g,\tau_1,\tau_2\rangle$, then $U\in\tau_1\cap\delta_2$ so $U\in\tau_1$ and $U\in\delta_2$ and hence $X\setminus U\in\delta_1$ and $X\setminus U\in\tau_2$. Therefore $g^{-1}[X\setminus U]\in\delta_1$ and $g^{-1}[X\setminus U]\in\tau_2$ so $g^{-1}[X\setminus U]=U^*\in\delta_1\cap\tau_2\not=\tau_1\cap\delta_2$. For condition 3, note that $g^{-1}[X\setminus U]$ needn't be compact open when $U$ is compact open in $\langle X;\tau\rangle$.    
\end{proof}
This justifies our requirement in condition 2 of Definition \ref{pairwise S4 De Morgan space} and condition 2 of Definition \ref{pairwise de morgan stone space} that $g$ be twist continuous as well as the requirement in \cite{mcdonald} that $U^*$ be compact open whenever $U$ is compact open. As we have seen, the analogue of this requirement in the setting of Priestley duality is implied by assuming $g$ is a continuous decreasing function.

\section{Applications} \label{sec:applications}
In this section, we investigate various applications of our established duality in the setting of S4 De Morgan algebras as well as the model theory of an S4-type modal extension of the calculus FDE.

\subsection{Filters and ideals} Following \cite{bezhanishvili}, we provide bitopological characterizations of filters and ideals in the setting of De Morgan algebras under our established duality. In doing so, we lift various well-known facts about Priestley duality for bounded distributive lattices to our bitopological duality for De Morgan algebras.

\begin{proposition}\label{proposition 5.1}
    Let $A$ be a De Morgan algebra, let $\mathfrak{F}(A)$ be the filters of $A$, let $\mathfrak{I}(A)$ be the ideals of $A$, and let $X$ be the De Morgan Priestley space dual to $A$. Then:
    \begin{enumerate}
        \item $\langle\mathfrak{F}(A);\supseteq\rangle$ is isomorphic to  $\langle\delta\cap\mu;\subseteq\rangle$; 
        \item $\langle\mathfrak{I}(A);\subseteq\rangle$ is isomorphic to $\langle\tau\cap\mu;\subseteq\rangle$. 
    \end{enumerate}   
\end{proposition}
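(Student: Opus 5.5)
We will prove Proposition \ref{proposition 5.1} by the standard Priestley-duality argument for bounded distributive lattices. The involution $-$ and the map $g$ play no role, because filters and ideals are lattice-theoretic notions. Throughout, $X=\mathfrak{P}(A)$ carries the Priestley topology $\tau$ and the order $\subseteq$. We write $\delta$ for the closed sets, $\mu$ for the upsets, and $\phi(a)=\{x\in X:a\in x\}$, so that the clopen upsets are exactly the sets $\phi(a)$ (by the representation recalled above).

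For (1), define $\Phi\colon\mathfrak{F}(A)\to\delta\cap\mu$ by $\Phi(F)=\bigcap_{a\in F}\phi(a)=\{x\in X:F\subseteq x\}$. This set is an intersection of clopen upsets, so it is a closed upset. It is immediate that $F\subseteq F'$ implies $\Phi(F')\subseteq\Phi(F)$, so $\Phi$ is order-reversing as required by the $\supseteq$ order. In the other direction, define $\Psi(C)=\{a\in A:C\subseteq\phi(a)\}$. This is a filter, because $\phi$ preserves $\wedge$ and is monotone, and $\Psi$ is also order-reversing. The proof then reduces to two identities.

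The first identity is $\Psi\Phi(F)=F$. The inclusion $F\subseteq\Psi\Phi(F)$ is trivial. For the reverse, suppose $a\notin F$. Then $F\cap{\downarrow}a=\emptyset$, so Theorem \ref{prime filter theorem} gives a prime filter $x\supseteq F$ with $a\notin x$. Hence $x\in\Phi(F)\setminus\phi(a)$, and so $a\notin\Psi\Phi(F)$.

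The second identity is $\Phi\Psi(C)=C$, and it is the main obstacle. It amounts to showing that every closed upset is the intersection of the clopen upsets containing it. Take $y\notin C$. For each $x\in C$ we have $x\not\leq y$, since otherwise $y\in C$ because $C$ is an upset. The Priestley separation axiom then gives a clopen upset $\phi(a_x)$ with $x\in\phi(a_x)$ and $y\notin\phi(a_x)$. These sets cover $C$. Since $C$ is closed in a compact space, it is compact, so finitely many $\phi(a_{x_1}),\dots,\phi(a_{x_n})$ already cover $C$. Put $a=a_{x_1}\vee\dots\vee a_{x_n}$. Then $C\subseteq\phi(a)$ and $y\notin\phi(a)$, so $a\in\Psi(C)$ and $y\notin\Phi\Psi(C)$. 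The inclusion $C\subseteq\Phi\Psi(C)$ is trivial. Mutually inverse order-reversing maps give the isomorphism $\langle\mathfrak{F}(A);\supseteq\rangle\cong\langle\delta\cap\mu;\subseteq\rangle$.

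For (2), define $I\mapsto\bigcup_{a\in I}\phi(a)$. This is an open upset, being a union of clopen upsets, and the map is order-preserving. Its inverse is $U\mapsto\{a:\phi(a)\subseteq U\}$, which is an ideal because $\phi$ preserves $\vee$. The first composite is the identity by the compactness of each $\phi(a)$ together with the directedness of $\{\phi(b):b\in I\}$: if $\phi(a)\subseteq\bigcup_{b\in I}\phi(b)$, then $\phi(a)\subseteq\phi(b_1\vee\dots\vee b_n)$ for some $b_1,\dots,b_n\in I$, so $a\leq b_1\vee\dots\vee b_n\in I$ by the injectivity of $\phi$ in the order sense. The second composite is the identity because every open upset is a union of clopen upsets. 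To see this, take $x\in U$ with $U$ an open upset. Then $X\setminus U$ is a closed downset not containing $x$, so the order-dual of the compactness argument above (applied to $X\setminus U$ versus the point $x$) yields a clopen upset $\phi(a)$ with $x\in\phi(a)\subseteq U$. Alternatively, (2) follows from (1) via complementation $U\mapsto X\setminus U$ and the standard correspondence between the lattice of filters and the lattice of ideals.
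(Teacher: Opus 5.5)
Your proposal is correct and uses the same maps as the paper: $F\mapsto\bigcap_{a\in F}\phi(a)$ with inverse $C\mapsto\{a\in A:C\subseteq\phi(a)\}$, and $I\mapsto\bigcup_{a\in I}\phi(a)$ with inverse $U\mapsto\{a\in A:\phi(a)\subseteq U\}$; the paper only names these maps, and your Prime Filter Theorem and compactness-plus-separation arguments supply the verification it omits. Your closing ``alternatively'' remark is loose, since complementation sends closed upsets to open \emph{downsets} and you would also need to pass through $g$ or the order dual of $A$, but your direct argument for (2) does not depend on it.
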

\begin{proof}
    For part 1, associate with each filter $x\in\mathfrak{F}(A)$, the closed upset $\bigcap\{\phi(a):a\in x\}$ and with each closed upset $U\subseteq X$, associate the filter $\{a\in A:U\subseteq\phi(a)\}$. For part 2, associate with each ideal $y\in\mathfrak{I}(A)$, the open upset $\bigcup\{\phi(a):a\in y\}$ and with each open upset $V\subseteq X$, associate the ideal $\{a\in A:\phi(a)\subseteq V\}$.   
\end{proof}

\begin{proposition}\label{prop 5.5}
    Let $\langle X;\leq,g,\tau\rangle$ be a De Morgan Priestley space and let $\langle X;g,\tau_1,\tau_2\rangle$ be its corresponding pairwise De Morgan Stone space. Then: 
    \begin{enumerate}
        \item $U$ is a closed upset of $\langle X;\leq,g,\tau\rangle$ if and only if $U$ is a $\tau_2$-closed set of $\langle X;g,\tau_1,\tau_2\rangle$; 
        \item $U$ is a closed downset of $\langle X;\leq,g,\tau\rangle$ if and only if $U$ is a $\tau_1$-closed set of $\langle X;g,\tau_1,\tau_2\rangle$.  
    \end{enumerate}
\end{proposition}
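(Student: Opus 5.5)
The plan is to reduce both parts to the way the pairwise Stone space is built from a Priestley space in the isomorphism $\mathbf{Priest}\cong\mathbf{PStone}$ of \cite{bezhanishvili}. Given $\langle X;\leq,g,\tau\rangle$, the associated bitopological space is $\langle X;g,\tau_1,\tau_2\rangle$, where $\tau_1$ is the collection of $\tau$-open upsets and $\tau_2$ is the collection of $\tau$-open downsets. The involution $g$ is simply carried along and plays no role in either statement.

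First I fix this description and check that it is really the one used. In \cite{bezhanishvili} the topologies $\tau_1,\tau_2$ are generated by the clopen upsets and the clopen downsets respectively. So I must verify that every open upset is a union of clopen upsets, and dually for downsets. This is the standard Priestley argument. Let $V$ be an open upset and $x\in V$. For each $y\notin V$ we have $x\not\leq y$, since $V$ is an upset. The Priestley separation axiom then gives a clopen upset $U_y$ with $x\in U_y$ and $y\notin U_y$. The sets $X\setminus U_y$ cover the compact set $X\setminus V$, so finitely many suffice. The intersection of the corresponding finitely many $U_y$ is a clopen upset containing $x$ and contained in $V$. The downset case is symmetric, using separation with the order reversed. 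Hence $\tau_1$ is exactly the set of open upsets and $\tau_2$ is exactly the set of open downsets. I expect this identification to be the only step with real content.

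Both parts then follow by taking complements, since complementation exchanges upsets with downsets and open sets with closed sets. For part 1: $U\in\delta_2$ iff $X\setminus U\in\tau_2$, iff $X\setminus U$ is an open downset, iff $U$ is a closed upset of $\langle X;\leq,g,\tau\rangle$. For part 2: $U\in\delta_1$ iff $X\setminus U\in\tau_1$, iff $X\setminus U$ is an open upset, iff $U$ is a closed downset.

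If one wants $\tau_1,\tau_2$ taken literally as generated by the biclopen bases, the first step is exactly what lets this complement argument go through unchanged. This matches the notation $\beta_1=\tau_1\cap\delta_2$, the clopen upsets, used throughout the paper.
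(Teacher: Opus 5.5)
Your proof is correct and follows essentially the paper's route: the paper also combines the Priestley fact that closed upsets are intersections of clopen upsets with complementation via $\beta_1=\{X\setminus U:U\in\beta_2\}$. That fact is equivalent to open downsets being unions of clopen downsets, which your compactness argument proves; the only difference is that you prove this step and get both directions at once, whereas the paper cites it and writes out only the implication from closed upset to $\tau_2$-closed.
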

\begin{proof}
 The result follows from \cite[Proposition 3.4]{bezhanishvili} however we outline the proof. For part 1, note that each closed upset is the intersection of clopen upsets in $\langle X;\leq,g,\tau\rangle$, clopen upsets are in $\beta_1$, and hence it follows that closed upsets are intersections of elements from $\beta_1$. As $\beta_1=\{X\setminus U:U\in\beta_2\}$, it follows that the intersection of elements of $\beta_1$ are intersections of complements of elements of $\beta_2$. Since $\beta_2$ is a basis, they are complements of unions of elements of $\beta_2$ and thus unions of such elements are in $\tau_2$. Hence, since closed upsets are complements of elements in $\tau_2$, they are in $\delta_2$. The proof of part 2 follows analogously to the proof of part 1.      
\end{proof}

\begin{corollary}
      Let $A$ be a De Morgan algebra, let $\mathfrak{F}(A)$ be the collection of all filters of $A$, let $\mathfrak{I}(A)$ be the collection of all ideals of $A$, and let $X$ be the pairwise De Morgan Stone space dual to $A$. Then:
    \begin{enumerate}
        \item $\langle\mathfrak{F}(A);\supseteq\rangle$ is isomorphic to $\langle\delta_2;\subseteq\rangle$;   
        \item $\langle\mathfrak{I}(A);\subseteq\rangle$ is isomorphic to $\langle\tau_1;\subseteq\rangle$. 
    \end{enumerate}
\end{corollary}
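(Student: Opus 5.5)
The plan is to obtain the corollary by composing Proposition \ref{proposition 5.1} with Proposition \ref{prop 5.5}. The lattice-theoretic content, namely matching filters and ideals of $A$ with closed and open upsets of the Priestley dual, is already contained in Proposition \ref{proposition 5.1}. It then remains to identify those families of upsets with $\delta_2$ and $\tau_1$ of the corresponding pairwise De Morgan Stone space, via the isomorphism between $\mathbf{Priest}$ and $\mathbf{PStone}$ of \cite{bezhanishvili}. Since the underlying set $X$ and the involution $g$ are the same in both descriptions of the dual of $A$, any such identification is the identity on subsets of $X$. It therefore preserves $\subseteq$, and composing it with the maps of Proposition \ref{proposition 5.1} yields order isomorphisms.

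For part 1, I will show that the closed upsets of $\langle X;\leq,g,\tau\rangle$ are exactly the members of $\delta_2$. This is precisely Proposition \ref{prop 5.5}(1), read as an equality of families of subsets: $\delta\cap\mu=\delta_2$. Composing $x\mapsto\bigcap\{\phi(a):a\in x\}$ with this identity gives $\langle\mathfrak{F}(A);\supseteq\rangle\cong\langle\delta_2;\subseteq\rangle$. I will also note that $\phi(a)=\phi_+(a)$ is the same set in both settings, so the maps are literally unchanged.

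For part 2, I need $\tau\cap\mu=\tau_1$. I take complements in Proposition \ref{prop 5.5}(2). A set $V$ is an open upset exactly when $X\setminus V$ is a closed downset. By Proposition \ref{prop 5.5}(2) this happens exactly when $X\setminus V\in\delta_1$, that is, when $V\in\tau_1$. Composing with the ideal correspondence $y\mapsto\bigcup\{\phi(a):a\in y\}$ of Proposition \ref{proposition 5.1}(2) then gives $\langle\mathfrak{I}(A);\subseteq\rangle\cong\langle\tau_1;\subseteq\rangle$.

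The main obstacle is the bookkeeping of which topology corresponds to which. I must check that, under the passage from the Priestley topology to the pair $(\tau_1,\tau_2)$, $\tau_1$ is generated by the clopen upsets $\phi_+(a)$ and $\tau_2$ by their complements $\phi_-(a)$. This is the same convention under which $\mathcal{A}_0$ consists of $(\tau_1,\delta_2)$-biclopens. Only with this convention do upsets line up with $\tau_1$ and $\delta_2$ rather than with $\tau_2$ and $\delta_1$. I will verify it directly: $\beta_1=\tau_1\cap\delta_2$ consists of the clopen upsets, so the $\tau_1$-open sets are unions of clopen upsets, hence open upsets. Conversely, every open upset of a Priestley space is a union of clopen upsets, by Priestley separation and compactness. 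This also independently confirms part 2 without passing through complements.
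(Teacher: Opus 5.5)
Your proposal is correct and is essentially the paper's argument. The paper's proof is the single line that the corollary follows from Propositions~\ref{proposition 5.1} and~\ref{prop 5.5}, that is, composing the filter and ideal correspondences with the identifications $\delta\cap\mu=\delta_2$ and $\tau\cap\mu=\tau_1$. Your complementation step, which derives $\tau\cap\mu=\tau_1$ from Proposition~\ref{prop 5.5}(2), and your check that $\tau_1$ is generated by the clopen upsets $\phi_+(a)$ just make explicit what the paper leaves implicit.
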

\begin{proof}  
The result follows immediately from Propositions \ref{proposition 5.1} and \ref{prop 5.5}. 
\end{proof}

\subsection{Bitopological completeness of $G_{S4-FDE}$}

In this subsection, we briefly introduce the logical counterpart of S4 De Morgan algebras by providing the corresponding Gentzen calculus and sketching the completeness theorem. Before proceeding, let us formalize the notion of validity in the extension of De Morgan algebras.

\begin{definition}
    Let $A$ be an algebra in a language $\mathcal{L}$ with a De Morgan algebra reduct, and let $\Gamma \cup \{\psi\}$ be a finite set of formulas in $\mathcal{L}$. 

    \begin{enumerate}
        \item We say that a sequent $\Gamma \vdash \psi$ is \textit{valid in $A$}, denoted $\Gamma \vDash_{A} \psi$, if for every evaluation $e: \mathit{Form}_{\mathcal{L}} \to A$, it holds that:
        \[ \bigwedge_{\varphi \in \Gamma} e(\varphi) \leq e(\psi) \]
        (By convention, if $\Gamma = \emptyset$, then $\bigwedge \emptyset = \top^{A}$).
        
        \item For a class of algebras $\mathbb{K}$, we say the sequent $\Gamma \vdash \psi$ is \textit{valid in $\mathbb{K}$}, denoted $\Gamma \vDash_{\mathbb{K}} \psi$, if $\Gamma \vDash_{A} \psi$ for every algebra $A \in \mathbb{K}$.
        
        \item A Gentzen calculus $G$ is said to be \textit{sound and complete} with respect to the class $\mathbb{K}$ if, for every finite set of formulas $\Gamma$ and formula $\psi$:
        \[ \Gamma \vdash_G \psi \iff \Gamma \vDash_{\mathbb{K}} \psi. \]
    \end{enumerate}
\end{definition}

S4 De Morgan algebras correspond to the S4-modal extension of the bounded version of Belnap-Dunn logic. The original Belnap-Dunn logic, often referred to as First-Degree Entailment (FDE), was introduced in \cite{belnap,Dunn:SemanticsBD} as the logic of the four-element De Morgan lattice $\mathbf{DM_4}$ (presented in Figure \ref{fig:DM4}) and has since been intensively studied (for a recent overview of the field, see \cite{Omori-Walsing:40FDE}). Apart from its four-valued semantics, this logic gained fame as a system that does not contain any theorems.

Here, we recall the Gentzen calculus for Belnap-Dunn logic introduced in \cite{Font:BD} (originally disguised as a Hilbert calculus in \cite{Anderson-Belnap:Entailment}), supplemented with additional rules for the bounds $\top$ and $\bot$.
\begin{table}[htbp]
\centering
\label{f:GB}
\begin{align*}
\dfrac{}{\Gamma, \varphi \vdash \varphi} \quad (Id) 
&\qquad\qquad 
\dfrac{\Gamma \vdash \varphi}{\Gamma, \psi \vdash \varphi} \quad (W) 
\\[1em]
\dfrac{\Gamma, \varphi, \psi \vdash \xi}{\Gamma, \varphi \land \psi \vdash \xi} \quad (\land \vdash) 
&\qquad\qquad
\dfrac{\Gamma \vdash \varphi \quad \Gamma \vdash \psi}{\Gamma \vdash \varphi \land \psi} \quad (\vdash \land)
\\[1em]
\dfrac{\Gamma, \varphi \vdash \xi \quad \Gamma, \psi \vdash \xi}{\Gamma, \varphi \lor \psi \vdash \xi} \quad (\lor \vdash) 
&\qquad\qquad
\dfrac{\Gamma \vdash \varphi}{\Gamma \vdash \varphi \lor \psi} \, , \, \dfrac{\Gamma \vdash \psi}{\Gamma \vdash \varphi \lor \psi} \quad (\vdash \lor)
\\[1em]
\dfrac{\varphi \vdash \psi}{\neg \psi \vdash \neg \varphi} \quad (\neg) 
&\qquad\qquad
\dfrac{\Gamma \vdash \varphi \quad \Gamma, \varphi \vdash \psi}{\Gamma \vdash \psi} \quad (\text{Cut})
\\[1em]
\dfrac{\Gamma, \varphi \vdash \psi}{\Gamma, \neg\neg\varphi \vdash \psi} \quad (\neg\neg \vdash) 
&\qquad\qquad
\dfrac{\Gamma \vdash \varphi}{\Gamma \vdash \neg\neg\varphi} \quad (\vdash \neg\neg)
\\[1em]
\dfrac{}{\Gamma, \bot \vdash \varphi} \quad (\bot \vdash) 
&\qquad\qquad
\dfrac{}{\Gamma \vdash \top} \quad (\vdash \top)
\end{align*}
\caption{The rules of the sequent calculus $G_{FDE}$}
\end{table}

Let us note that while the Gentzen calculus $G_{FDE}$ is the most commonly mentioned formulation, other versions of Gentzen calculi, more suitable for proof-theoretic purposes, exist as well. For instance, \cite{Pynko:Genzen} provides a cut-free Gentzen calculus for Belnap-Dunn logic. Furthermore, Hilbert-style calculi for Belnap-Dunn logic are well established \cite{Font:BD,Shramko:HilbertFDE}; however, these systems are trickier to adapt to modal extensions, as they cannot explicitly capture many of the necessary structural inference rules.

We now introduce the additional rules for the modal operator $\Diamond$, which, combined with $G_{FDE}$, yield the calculus $G_{S4-FDE}$ for S4 De Morgan algebras.

\begin{table}[htbp]
\centering
\label{t:GS4-FDE}
\begin{gather*}
\begin{aligned}
\dfrac{}{\Gamma,\Diamond \bot \vdash \varphi} \quad (\Diamond \bot)
&\qquad\qquad
\dfrac{\varphi \vdash \psi}{\Diamond \varphi \vdash \Diamond \psi} \quad (\Diamond\text{ Mon}) 
\\[1em]
\dfrac{\Gamma \vdash \varphi}{\Gamma \vdash \Diamond\varphi} \quad (\Diamond \text{ T})
&\qquad\qquad
\dfrac{\Gamma, \Diamond\varphi \vdash \psi}{\Gamma, \Diamond\Diamond\varphi \vdash \psi} \quad (\Diamond \text{ 4})
\end{aligned}
\\[1.5em]
\dfrac{\Gamma, \Diamond\varphi \lor \Diamond\psi \vdash \xi}{\Gamma, \Diamond(\varphi \lor \psi) \vdash \xi} \quad (\Diamond \lor)
\end{gather*}
\caption{The additional rules for $\Diamond$ of the sequent calculus $G_{S4-FDE}$}
\end{table}
\begin{theorem}\label{algebraic completeness}
    The calculus $G_{S4-FDE}$ is sound and complete with respect to the class of S4 De Morgan algebras.
\end{theorem}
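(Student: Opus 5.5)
Soundness will be proved by induction on the length of derivations in $G_{S4-FDE}$. Fix an S4 De Morgan algebra $A$ and an evaluation $e$ interpreting $\neg$ as $-$, $\Diamond$ as $\nabla$, $\top,\bot$ as $1,0$, and write $e(\Gamma)=\bigwedge_{\varphi\in\Gamma}e(\varphi)$. Then I will check that each rule preserves the property $e(\Gamma)\le e(\psi)$.

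The rules of $G_{FDE}$ are routine. (Id) and (W) hold because meets decrease. ($\vee\vdash$) uses distributivity: $e(\Gamma)\wedge(a\vee b)=(e(\Gamma)\wedge a)\vee(e(\Gamma)\wedge b)$. ($\neg$) uses that $-$ is order-inverting. The double-negation rules use $--a=a$. (Cut) follows from $e(\Gamma)\le e(\varphi)$, since then $e(\Gamma)=e(\Gamma)\wedge e(\varphi)\le e(\psi)$. The bound rules hold trivially.

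For the modal rules:
\begin{itemize}
\item $(\Diamond\bot)$ holds since $\nabla 0=0$.
\item $(\Diamond\text{ Mon})$ holds since additivity gives monotonicity: $a\le b$ implies $\nabla b=\nabla(a\vee b)=\nabla a\vee\nabla b\ge\nabla a$.
\item $(\Diamond\text{ T})$ holds since $a\le\nabla a$.
\item $(\Diamond\text{ 4})$ and $(\Diamond\vee)$ hold because $\nabla\nabla a=\nabla a$ and $\nabla(a\vee b)=\nabla a\vee\nabla b$. In each case the premise and conclusion have equal left-hand sides.
\end{itemize}

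Completeness will go through a Lindenbaum--Tarski construction. Define $\varphi\preceq\psi$ iff $\varphi\vdash_G\psi$. This relation is reflexive by (Id) and transitive by (Cut) with (W). Let $\varphi\equiv\psi$ iff $\varphi\preceq\psi$ and $\psi\preceq\varphi$, and form the quotient $L=\mathit{Form}/{\equiv}$.

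The left and right rules for $\wedge,\vee$ show that $[\varphi\wedge\psi]$ and $[\varphi\vee\psi]$ are the meet and join, and $(\bot\vdash)$, $(\vdash\top)$ give the bounds. Distributivity follows from $(\vee\vdash)$ with a context: from $\varphi,\psi\vdash\varphi\wedge\psi$ and $\varphi,\xi\vdash\varphi\wedge\xi$ we derive $\varphi,\psi\vee\xi\vdash(\varphi\wedge\psi)\vee(\varphi\wedge\xi)$, and then apply $(\wedge\vdash)$.

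Rule $(\neg)$ makes $\neg$ order-reversing, so it is well defined on classes. The double-negation rules give $\neg\neg\varphi\equiv\varphi$, and an order-reversing involution on a lattice automatically satisfies the De Morgan laws. Hence $L$ is a De Morgan algebra.

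$(\Diamond\text{ Mon})$ makes $\Diamond$ well defined and monotone on $L$. $(\Diamond\vee)$ gives $\Diamond(\varphi\vee\psi)\preceq\Diamond\varphi\vee\Diamond\psi$, and monotonicity gives the converse. $(\Diamond\bot)$ gives $\Diamond\bot\equiv\bot$. $(\Diamond\text{ T})$ applied to $\varphi\vdash\varphi$ gives $\varphi\preceq\Diamond\varphi$. $(\Diamond\text{ 4})$ applied to $\Diamond\varphi\vdash\Diamond\varphi$ gives $\Diamond\Diamond\varphi\preceq\Diamond\varphi$. So $L$ is an S4 De Morgan algebra.

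Now suppose $\Gamma\nvdash_G\psi$, and take the canonical evaluation $e(\varphi)=[\varphi]$. The sequent $\Gamma\vdash\psi$ is derivable iff $\bigwedge\Gamma\vdash\psi$ is derivable: one direction uses $(\wedge\vdash)$, the other uses $(\vdash\wedge)$ with (Id) and Cut. Hence $e(\Gamma)\not\le e(\psi)$ in $L$, and so the sequent fails in an S4 De Morgan algebra.

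The main obstacle is checking that everything is compatible with contexts in the absence of explicit structural rules. Specifically, I must show that $\equiv$ is a congruence for $\wedge,\vee$ under contexts, and that contraction and exchange are admissible, treating $\Gamma$ as a set as the paper does. I would handle these first as admissibility lemmas; the rest is routine.
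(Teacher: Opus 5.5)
Your proposal is correct and follows essentially the same route as the paper. Soundness is checked rule by rule, and completeness goes through the Lindenbaum--Tarski quotient under mutual derivability, with $(\Diamond\text{ Mon})$, $(\Diamond\vee)$, $(\Diamond\bot)$, $(\Diamond\text{ T})$ and $(\Diamond\text{ 4})$ yielding the S4 De Morgan axioms exactly as in the paper.

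You are somewhat more careful than the paper in three places: you use (W) alongside Cut for transitivity, you derive the De Morgan laws directly rather than citing completeness of $G_{FDE}$, and you spell out the reduction of $\Gamma\vdash\psi$ to $\bigwedge\Gamma\vdash\psi$ under the canonical evaluation. The ``obstacle'' you flag is a non-issue. With $\Gamma$ a set, exchange and contraction are built in, and congruence for $\wedge,\vee$ follows automatically because $[\varphi\wedge\psi]$ and $[\varphi\vee\psi]$ are meets and joins.
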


\begin{proof}
    It is straightforward to check that all S4 De Morgan algebras satisfy the rules and axioms of $G_{S4-FDE}$. For the other implication, we proceed with the standard construction of the Lindenbaum–Tarski algebra. Let us define a relation $\theta$ on the set of formulas $\mathit{Form}_{\mathcal{L}}$ by $\varphi \mathrel{\theta} \psi$ if and only if $\varphi \vdash \psi$ and $\psi \vdash \varphi$. Clearly, by the rules \rul{Id} and \rul{Cut}, this is an equivalence relation. Let $A = \mathit{Form}_{\mathcal{L}} / \theta$ be the quotient set of formulas factored by this equivalence relation. We define an algebra $\mathbf{A}$ with universe $A$ and the following operations:
\begin{multicols}{2}
\begin{enumerate}
    \item $[\varphi] \lor_{\mathbf{A}} [\psi] = [\varphi \lor \psi]$; 
    \item $[\varphi] \land_{\mathbf{A}} [\psi] = [\varphi \land \psi]$; 
    \item $-[\varphi] = [\neg \varphi]$; 
    \item $0 = [\bot]$; 
    \item $1 = [\top]$; 
    \item $\nabla[\varphi] = [\Diamond \varphi]$.
\end{enumerate}
\end{multicols}
    Note that the operations $\lor_{\mathbf{A}}, \land_{\mathbf{A}}, -, \nabla$ are well-defined. For the propositional connectives, this can be easily verified using the rules \rul{\vdash \land}, \rul{\land \vdash}, \rul{\vdash \lor}, \rul{\lor \vdash}, and \rul{\neg}. For the modal operator $\nabla$, well-definedness follows directly from the monotonicity rule \rul{\Diamond\text{ Mon}}.

    Next, we consider the structure of the algebra $\mathbf{A}$. It can be easily checked that $\mathbf{A}$ is a De Morgan algebra (alternatively, this follows from the fact that the calculus $G_{S4-FDE}$ expands the base calculus $G_{FDE}$, which is known to be complete with respect to the class of De Morgan algebras; see \cite{Font:BD}). 

    First, we verify that $\mathbf{A}$ is a \emph{modal} De Morgan algebra. To verify this, we need to check that $\nabla (a \lor b) = \nabla a \lor \nabla b$ and $\nabla 0 = 0$. The inequality $\nabla (a \lor b) \geq \nabla a \lor \nabla b$ follows from \rul{Id}, \rul{\vdash \lor}, \rul{\lor \vdash}, and \rul{\Diamond\text{ Mon}}, while the reverse inequality $\nabla (a \lor b) \leq \nabla a \lor \nabla b$ follows from \rul{\Diamond \lor}. The identity $\nabla 0 = 0$ follows from \rul{\Diamond \bot} (which gives $\nabla 0 \leq 0$) and \rul{\bot \vdash} (which guarantees $0 \leq \nabla 0$). Hence, $\mathbf{A}$ is a modal De Morgan algebra. To check that $\mathbf{A}$ is an S4 De Morgan algebra, it remains to verify the conditions $a \leq \nabla a$ and $\nabla \nabla a \leq \nabla a$ (which implies $\nabla a = \nabla \nabla a$). However, these inequalities easily follow from the rules \rul{\Diamond T} and \rul{\Diamond 4}, respectively.
\end{proof}

As a brief application of the obtained duality results, we provide bitopological soundness and completeness results for the calculus S4-FDE. 
\begin{definition}
    Let $X$ be a PS4D-space, let $\text{Var}=\{x_i:i<\omega\}$ be a countable set of variables, and let $v_X\colon\text{Var}\to\mathcal{A}_0(X)$ be a map which sends every $x_i\in\text{Var}$ to some $(\tau_1,\delta_2)$-biclopen set $U\in\mathcal{A}_0(X)$. Then extend $v_X$ to a map $\widehat{v_X}\colon \mathit{Form}_{\mathcal{L}}\to\mathcal{A}_0(X)$ according to the following inductive definition: 
    \begin{enumerate}
        \item $\widehat{v_X}(\bot)=\emptyset$ and $\widehat{v_X}(\top)=X$; 
        \item $\widehat{v_X}(\phi\wedge\psi)=\widehat{v_X}(\phi)\cap\widehat{v_X}(\psi)$; 
        \item $\widehat{v_X}(\phi\vee\psi)=\widehat{v_X}(\phi)\cup\widehat{v_X}(\psi)$; 
        \item $\widehat{v_X}(\neg\phi)=\widehat{v_X}(\phi)^*$; 
        \item $\widehat{v_X}(\Diamond \phi)=\nabla_R\widehat{v_X}(\phi).$
    \end{enumerate}
\end{definition}
\begin{definition}
    Given a finite set of formulas $\Gamma \cup\{\psi\}$ in $\mathcal{L}$, define $\models\subseteq\mathit{Form}_{\mathcal{L}} \times \mathit{Form}_{\mathcal{L}} $: 
    \[\Gamma\models\psi\Longleftrightarrow \bigcap_{\varphi \in \Gamma}\widehat{v_X}(\phi)\subseteq\widehat{v_X}(\psi),\]
    where, by convention, if $\Gamma = \emptyset$, we define $\bigcap_{\varphi \in \Gamma} \widehat{v_X}(\varphi)=X$.
\end{definition}
Let us note that since $\Gamma$ is a \emph{finite} set of formulas, the set $\bigcap_{\varphi \in \Gamma}\widehat{v_X}(\phi)$ is biclopen.

\begin{theorem}
    $G_{S4-FDE}$ is sound and complete with respect to the class of all PS4D-spaces. 
\end{theorem}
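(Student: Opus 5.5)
The plan is to reduce both directions to the algebraic result, Theorem \ref{algebraic completeness}, using the representation theorem and Lemma \ref{space to algebra} as the bridge. First, I would make precise the reading of the consequence relation: $\Gamma\models\psi$ over the class of PS4D-spaces means that for every PS4D-space $X$ and every valuation $v_X\colon\text{Var}\to\mathcal{A}_0(X)$ we have $\bigcap_{\varphi\in\Gamma}\widehat{v_X}(\varphi)\subseteq\widehat{v_X}(\psi)$. The key observation is that valuations $v_X$ into $\mathcal{A}_0(X)$ are exactly evaluations $e\colon \mathit{Form}_{\mathcal{L}}\to\mathcal{A}_0(X)$ in the algebraic sense. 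The inductive clauses defining $\widehat{v_X}$ coincide with the operations $\cap,\cup,{}^*,\emptyset,X,\nabla_R$ of $\mathcal{A}_0(X)$, and the order of $\mathcal{A}_0(X)$ is inclusion, with finite meets given by finite intersections. Hence $\Gamma\models\psi$ in $X$ for all $v_X$ holds iff $\Gamma\vDash_{\mathcal{A}_0(X)}\psi$.

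For soundness, suppose $\Gamma\vdash_{G_{S4-FDE}}\psi$ and let $X$ be a PS4D-space. By Lemma \ref{space to algebra}, $\mathcal{A}_0(X)$ is an S4 De Morgan algebra. The soundness half of Theorem \ref{algebraic completeness} then gives $\Gamma\vDash_{\mathcal{A}_0(X)}\psi$, so by the observation above $\Gamma\models\psi$ in $X$ for every $v_X$.

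For completeness, suppose $\Gamma\not\vdash\psi$. By Theorem \ref{algebraic completeness} there is an S4 De Morgan algebra $A$ (for instance the Lindenbaum--Tarski algebra) and an evaluation $e$ into $A$ with $\bigwedge_{\varphi\in\Gamma}e(\varphi)\not\leq e(\psi)$. Set $X=\mathcal{S}_0(A)$, which is a PS4D-space by Lemma \ref{algebra to space}. By Theorem \ref{rep thm}, $\phi_+\colon A\to\mathcal{A}_0(X)$ is an isomorphism of S4 De Morgan algebras. Define $v_X(x_i)=\phi_+(e(x_i))$. A straightforward induction on formulas, using that $\phi_+$ preserves $\wedge,\vee,-,0,1,\nabla$ (proved in Theorem \ref{rep thm}), gives $\widehat{v_X}=\phi_+\circ e$. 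Since $\phi_+$ is an order embedding preserving finite meets, we get $\bigcap_{\varphi\in\Gamma}\widehat{v_X}(\varphi)=\phi_+(\bigwedge_{\varphi\in\Gamma}e(\varphi))\not\subseteq\phi_+(e(\psi))=\widehat{v_X}(\psi)$, so $\Gamma\not\models\psi$ in $X$.

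The only delicate point is bookkeeping rather than mathematics. One must check the empty-$\Gamma$ convention (both sides use the top element, $X=\phi_+(1)$). One must also check that the clause $\widehat{v_X}(\Diamond\varphi)=\nabla_R\widehat{v_X}(\varphi)=R[\widehat{v_X}(\varphi)]$ matches $\phi_+(\nabla a)=R_A[\phi_+(a)]$, which was established in the proof of Lemma \ref{algebra to space}. I expect this inductive step for $\Diamond$ and $\neg$ (via $\phi_+(-a)=\phi_+(a)^*$) to be the main thing to verify carefully; everything else is immediate from the cited results.
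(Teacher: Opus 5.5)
Your proposal is correct and follows the paper's route: transfer Theorem \ref{algebraic completeness} along the representation $A\cong\mathcal{A}_0(\mathcal{S}_0(A))$ of Theorem \ref{rep thm}, while making explicit what the paper's one-line proof leaves implicit, namely soundness via Lemma \ref{space to algebra} and the induction $\widehat{v_X}=\phi_+\circ e$. One caution on the step you flagged: the paper's argument for $\phi_+(\nabla a)\subseteq R_A[\phi_+(a)]$ in Lemma \ref{algebra to space} wrongly infers $a\in x$ from $\nabla a\in x$, so instead justify it by applying Theorem \ref{prime filter theorem} to ${\uparrow}a$ and the ideal $\{b\in A:\nabla b\notin y\}$ (an ideal because $\nabla$ is normal and additive and $y$ is prime), which for any $y\in\phi_+(\nabla a)$ yields a prime filter $x\ni a$ with $xR_Ay$.
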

\begin{proof}
    By Theorem \ref{algebraic completeness}, the calculus $G_{S4-FDE}$ is sound and complete with respect to the class of S4 De Morgan algebras. By Theorem \ref{main theorem}, every S4 De Morgan algebra $\alg A$ is isomorphic to $\mathcal{A}_0(X)$ where $X=\mathcal{A}_0(X)$ is the PS4D-space dual to $\alg A$.
\end{proof}

\section{Conclusions and future lines of research}
In this work, we have investigated the duality theory of S4 De Morgan algebras from the bitopological perspective of pairwise Stone spaces. We have also explored various applications of the obtained duality. 

Related future lines of research may involve extending our obtained duality results to the setting of De Morgan groupoids, as described in \cite{mcdonald}. The duality theory of such algebraic structures would involve translating the spectral duality for De Morgan groupoids developed in \cite{mcdonald} under the duality established here. Additionally, we believe there are several aspects which deserve further investigation. It is well known that De Morgan Algebras can be equivalently algebraically represented as twisted products of distributive lattices. This specific form of representation very naturally corresponds to the bitopological representation of S4 De Morgan Algebras investigated in this paper. However, the precise connection here remains to be established.

\subsubsection*{Acknowledgments}
We thank the Department of Theoretical Computer Science at the Institute of Computer Science of the Czech Academy of Sciences for their support throught the preparation of this work. 

\subsubsection*{Funding statement}
This work has been funded by a grant from the Programme Johannes Amos Comenius
under the Ministry of Education, Youth and Sports of the Czech Republic, grant no.
CZ.02.01.01/00/23$_{-}$025/0008711.

\end{document}